\newcommand\bk{{\Bbbk}}
\newcommand\cO{\operatorname{\mathcal O}}
\newcommand\End{\operatorname{End}}
\newcommand\Enda{\operatorname{End}_A}
\newcommand\Endhb{\operatorname{End}_{hb}}
\newcommand\Nhb{\operatorname{\mathcal N}_{hb}}
\newcommand\Ker{\operatorname{Ker}}
\newcommand\Hom{\operatorname{Hom}}
\newcommand\aff{\operatorname{aff}}
\newcommand\Aut{\operatorname{Aut}}
\newcommand\Authb{\operatorname{Aut}_{hb}}
\begin{document}
\title*{The endomorphisms
  monoid of a homogeneous vector bundle}
\titlerunning{Endomorphisms of homogeneous vector bundles}
\author{L. Brambila-Paz and Alvaro Rittatore}%
\institute{L. Brambila-Paz \at CIMAT, Jalisco S/N
Mineral de Valenciana, 
3624 Guanajuato, Guanajuato, M\'exico\\
\email{lebp@cimat.mx}
\and Alvaro Rittatore \at
 Facultad de Ciencias, Universidad de la Rep\'ublica, Igu\'a 4225, 11400 Montevideo, Uruguay
\email {alvaro@cmat.edu.uy}}


\maketitle

\abstract*{
Let $E$ be a  homogeneous
vector bundle over the abelian variety $A$ and let $\Aut_A(E)$ be the
(algebraic) group of automorphims of $E$ as a vector bundle.   Then
the fiber over $0$ is a  $\Aut_A(E)$-module. We prove that $E$ is
the induced space of this action to the whole group of
automorphims of the homogeneous vector bundle.
The principal significance of this result is that it allows one to
obtain results about the  structure of $E$ and it provides some
insight into the structure of its endomorphisms monoid.
\keywords{homogeneous vector bundles, algebraic monoids.\\ {\bf AMS MSC2010:} 14M17, 20M32, 14J60.}}
\abstract{
Let $E$ be a  homogeneous
vector bundle over the abelian variety $A$ and let $\Aut_A(E)$ be the
(algebraic) group of automorphims of $E$ as a vector bundle.   Then
the fiber over $0$ is a  $\Aut_A(E)$-module. We prove that $E$ is
the induced space of this action to the whole group of
automorphims of the homogeneous vector bundle.
The principal significance of this result is that it allows one to
obtain results about the  structure of $E$ and it provides some
insight into the structure of its endomorphisms monoid.
\keywords{homogeneous vector bundles, algebraic monoids.\\ {\bf AMS MSC2010:} 14M17, 20M32, 14J60.}}

\section{Introduction}
\label{sec:intro}
Let $A$ be an abelian variety over an algebraically closed field of arbitrary
characteristic $\Bbbk$.  A
vector bundle, which will be denoted by $\xi=(E,\rho, A)$ or $\rho:E\to A$,
  is called \emph{homogeneous} if for
all $a\in A$,  $E\cong t^*_a E$, where $t_a:A\to A$ is the
translation by $a$. A \emph{homomorphism} $\lambda:\xi\to \xi '$
between two homogeneous vector bundles $\xi=(E,\rho, A)$ and $\xi
'=(E',\rho ', A)$ is a pair $\lambda:=(f,t_a)$ such that the
following diagram
 \begin{center}
\mbox{ \xymatrix{
E\ar@{->}[r]^f\ar@{->}_\rho[d] &E'\ar@{->}^{\rho'}[d]\\
A\ar@{->}[r]_{t_a }& A } }
\end{center}
commutes and is linear in the fibers. We say that $\lambda$ is an
\emph{isomorphism} if $f:E\to E'$ is an isomorphism of algebraic
varieties; it is clear that in this case the pair $(f^{-1},
t_{-a}):\xi'\to \xi$
is a homomorphism of homogeneous vector bundles.

We denote by $\Hom_{hb}(E,E')$ the set of homomorphisms between
$\xi=(E,\rho, A)$ and $\xi '=(E',\rho ', A)$ and by $\Hom_A(E,E')$
the set of those homomorphisms that fix the base, i.e. $t_a=Id_A.$
If $E=E'$, then $\End_{hb}(E):=\Hom_{hb}(E,E) $ and
$\End_A(E):=\Hom_A(E,E).$ The group of automorphisms, i.e. those
endomorphisms where $f:E\to E$ is an isomorphism, is denoted by $\Authb(E)$.

It is of interest to have a natural and intrinsic characterization of $\End_{hb}(E).$
One of the main reasons is the generalization of a well known result of affine
monoids to normal algebraic monoids. More precisely, an affine monoid $M$ can be
embedded in $\End_\Bbbk(\Bbbk^n)$ as a closed submonoid, for
$n>>0$. In \cite[Theorem 5.3]{kn:ritbr} it was proved that
any normal algebraic monoid $M$
 can
be embedded as a closed submonoid of the endomorphisms monoid
$\End_{hb}(E)$ of an indecomposable homogeneous vector bundle $E$
over the Albanese variety $A(M)$ of $M$.

The aim of this paper is to describe the geometric and algebraic
structure of $\End_{hb}(E)$ and investigate the relation between
these structures and the structure of $E$ as a vector bundle.

In order to state our results we first recall  some known results.
The idea to study the relation of the algebraic structure of $\Aut_{hb}(E)$
 to the structure of $E$ as a vector bundle goes back at least as far as \cite{kn:miy}, where
Miyanishi considers homogeneous bundles. In \cite{kn:Mu78}, Mukai
 describes the category of homogeneous vector bundles over an abelian variety. Brion
and the second author proved in \cite{kn:ritbr}  that $\Endhb(E)$
is an algebraic monoid with unit group
 $\Aut_{hb}(E)$ and they showed that the Albanese morphism $\pi:\End_{hb}(E)\to
 A$ is a
morphism of algebraic monoids, with Kernel $\End_A(E)$. In
particular, the fiber $\End_A(E)$, over the unit element $0\in A$,
is an irreducible affine smooth algebraic monoid, with unit
group $\Aut_A(E):=\pi^{-1}(0)\cap \Aut _{hb}(E)$.

It  follows from \cite{kn:ritbr} that
$\pi:\Endhb(E)\to A$ is a homogeneous vector bundle with fiber
isomorphic to $\End_A(E)$. For any indecomposable homogeneous vector bundle $\rho:E\to A$
 of rank $r$  we prove that $\End_{hb}(E)\to A$ has rank at
 most $1+\frac{r(r-1)}{2}$ (see
Theorem \ref{thm:enhdvb}). Moreover, $\End_{hb}(E)\to A$ is
obtained by successive extensions of a line bundle $L$ associated
with $E$ (see Theorem \ref{thm:Endhbext}). In particular,  if $L$ is
a homogeneous line bundle, then $\End_{hb}(L)\cong L$ (see Lemma
\ref{lem:linemon} and Corollary \ref{coro:endofLisL}).

In order to describe the structure of $\End_{hb}(E)$ as a vector bundle
denote by $E_0$ the fiber over $0\in A$. Recall that \emph{the
induced space} $\Authb(E)*_{\Aut_A(E)}E_0$ is defined as the
geometric quotient of $\Authb(E)\times E_0$ under the diagonal
action of $\Aut_A(E)$ (see Definition \ref{defi:algo} below). In
Theorem \ref{thm:structure} we prove the following statement.

 \emph{ An indecomposable homogeneous vector bundle
$\rho:E\to A$ is the induced space from the action of $\Aut_A(E)$
on $E_0$ to the action of the automorphisms group $\Authb(E)$ on $E$, i.e.
\[
E\cong \Authb(E)*_{\Aut_A(E)}E_0.
\]}

The advantage of using the above description lies on the fact that
it allows us to describe the structure of $\End_{hb}(E)$ also when
$E$ is decomposable. That is, if $E=\bigoplus_{i,j} L_i\otimes
F_{i,j}$, where $L_i$ is a homogeneous line bundle and $F_{i,j}$
an unipotent bundle, then
\[
\End_{hb}(E)\cong  \bigoplus_i L_i\otimes
\bigl(\oplus_{j,k}\Hom_{hb} (F_{i,j},F_{i,k}) \bigr),
\]
and as algebraic monoid $\Endhb(E)$  decomposes as
\[
\Endhb(E)=\Authb(E)\sqcup \Nhb(E),
\]
where $\Nhb(E)$ is the ideal of pseudo-nilpotent elements (see
 theorems \ref{thm:endmon2} and \ref{thm.autnilp}). Moreover,
 the kernel $\Ker\bigl(\End_{hb}(E)\bigr)$ of the
$\End_{hb}(E)$ is the zero
  section
\[
\Ker \bigl(\Endhb(E)\bigr) = \bigl\{ \theta_a:E\to E\mathrel{:}
\theta_a(v_x)=0_{x+a}\
  \forall\, v_x\in E_x \bigr\} .
\]
In particular, $\Ker\bigl(\End_{hb}(E)\bigr)$ is isomorphic to the
abelian variety $A$ (see Proposition \ref{prop:kernelend}).

According to the above results, it is also shown that $\Nhb(E)$ is
also a homogeneous vector bundle, obtained by successive
extensions of $L$ (see Theorem \ref{thm:Endhbext}) and there
exists an exact sequence of vector bundles
 \begin{center}
\mbox{ \xymatrix{ 0 \ar@{->}[r] & \Nhb (E)\ar@{->}[r]&
\Endhb(E)\ar@{->}[r]^-{\rho} & \Endhb(L)\cong L\ar@{->}[r]      &
0 } }
\end{center}
Moreover, the morphisms in the above sequence are compatible with
the structures of semigroup, and if  $E\ncong L$, then the sequence
is non-trivial (see Theorem \ref{thm:sumadirecta}).

The paper is organized as follows: in Section \ref{sec:prelim} we
set up notation and terminology, and review some of the standard
facts on algebraic monoids and homogeneous vector bundles. In
Section \ref{sec:endo} we are concerned with the structure of
$\Hom_{hb}(E,E')$. Our main results
are stated and proved in Section \ref{sec:endmon}.  Section \ref{sec:exam} is devoted to the
study of $\Endhb(E)$ and $\End_A(E)$ when  $E$ is a homogeneous
vector bundle of small rank.

\section{General results}
\label{sec:prelim} In this section we recall  basic results on
algebraic monoids and homogeneous vector bundles over $A$.  For a
deeper discussion of the theory of algebraic monoids we refer the
reader to \cite{kn:Br06,kn:ritbr,kn:Ri98,kn:oam}
 and to \cite{kn:atiyahsh,kn:atiyahconn,kn:atiyahvbec,kn:miy,kn:Mu78} for the theory of
homogeneous vector bundles.

\subsection{Algebraic monoids}
Let $M$ be an {\em algebraic monoid}\/ and $G(M)$ the {\em unit
group}\/ of $M$. It is well known that $G(M)$ is an algebraic
group, open in $M$ (see for example \cite{kn:Ri98}). If $M$ is an irreducible algebraic monoid,
then its \emph{Kernel}, denoted by $\operatorname{Ker}(M)$, is the
minimum closed ideal and always exists. Indeed, if $M$ is an
affine algebraic monoid  its $\Ker(M)$ is the unique closed
$\bigl(G(M)\times G(M)\bigr)$-orbit (see \cite{kn:Ri98,
kn:ritbr}).

Let $M,N$ be algebraic monoids. A morphism of algebraic varieties $\varphi:M\to N$ is a
\emph{morphism of algebraic monoids} if
$\varphi(ab)=\varphi(a)\varphi(b)$ for any $a,b\in M$ and
$\varphi(1_M)=1_N$. If  $\varphi: M\to N$ is an isomorphism of
algebraic monoids we write $M\cong_{am}N$.

\begin{definition}
\label{defi:algo} Let $H\subset G$ be an algebraic subgroup of an
algebraic group $G$ such that $H$ acts on an algebraic variety
$X$. The {\em induced space}\/ $G*_HX$ is defined as the geometric
quotient of $G\times X$ under the $H$-action
$h\cdot(g,x)=(gh^{-1},h\cdot x)$. The class of $(g,x)$ in $G*_HX$
is denoted as $[g,x]$.
\end{definition}

 Under
mild conditions on $X$ (e.g.~$X$ is covered by quasi-projective
$H$-stable open subsets), the induced space always exists.
Clearly, $G*_HX$ is a $G$-variety, for the action induced by
$a\cdot (g,x)=(ag,x)$. The morphism $\pi:G*_HX\to G/H$ induced by
$(g,x)\mapsto [g]=gH$ is a fiber bundle over $G/H$ with fiber
isomorphic to $X$. If moreover $X$ is an $H$-module, then $G*_HX\to
G/H$ is a vector bundle. For more information about induced spaces
we refer the reader to \cite{kn:bb-induced} and \cite{kn:Ti06}.

\begin{remark}\label{remark:strucmonoid}
Chevalley's Structure Theorem for an algebraic group $G$ states
that if  $A(G)$ is the Albanese group of $G$,
then the Albanese morphism $p:G\to A(G)$ fits into an exact
sequence of algebraic groups
\begin{center}
\mbox{ \xymatrix{ 1\ar@{->}[r]&G_{{\aff}}\ar@{->}[r]&
G\ar@{->}[r]^-p
&A(G) \ar@{->}[r] & 0 } }
\end{center}
where $G_{\aff}$ is a normal connected affine algebraic group.

In \cite{kn:Br06,kn:ritbr}, Brion and  Rittatore  generalize
Chevalley's decomposition to irreducible normal algebraic monoids.
 In this case they prove  that if $G$ is the
 unit group,
then $M$ admits a Chevalley's decomposition:

\begin{center}
\mbox{ \xymatrix{
1\ar@{->}[r]&M_{{\aff}}=\overline{G_{{\aff}}}\ar@{->}[r]&
M=\overline{G}\ar@{->}[r]^-p
  &A(G) \ar@{->}[r] &0\\
1\ar@{->}[r]&G_{{\aff}}\ar@{->}[r]\ar@{^(->}[u]&
G\ar@{->}[r]_-{p|_{_G}}\ar@{^(->}[u] &A(G) \ar@{->}[r]\ar@{=}[u] & 0 } }
\end{center}
where $p:M\to A(G)=G/G_{{\aff}}$ (respectively ~$p|_{_G}:G\to A(G)$) is
the Albanese morphism  of $M$ (respectively ~$G$). If $Z^0$
denotes the connected center of $G$, then $A(G)\cong_{am} Z^0/(Z^0\cap
G_{{\aff}})$ and $
M=G\cdot M_{{\aff}}=Z^0\cdot M_{{\aff}}$. Moreover, $M
\cong G*_{G_{{\aff}}}M_{{\aff}}\cong  Z^0*_{Z^0\cap
  G_{{\aff}}}M_{{\aff}}$. Here if $A,B\subset M$, then $A\cdot B=\{
ab \mathrel{:} a\in A, b\in B\}$.
\end{remark}

Let us mention a consequence of this Chevalley's decomposition for
algebraic monoids  that will prove
to be extremely useful in Section \ref{sec:endmon}.

\begin{corollary}
\label{coro:ker} Let $M$ be an irreducible algebraic monoid, with unit group
$G$. Then
$\Ker(M)=G\Ker(M_{{\aff}})G=G\cdot\Ker(M_{{\aff}})=Z^0\cdot\Ker(M_{{\aff}})$
where $Z^0$ is the connected center of $G$.
\end{corollary}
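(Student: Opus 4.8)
The plan is to deduce the formula for $\Ker(M)$ directly from the Chevalley decomposition recalled in Remark \ref{remark:strucmonoid}, together with the characterization of the Kernel of an affine algebraic monoid as its unique closed $\bigl(G(M_{\aff})\times G(M_{\aff})\bigr)$-orbit. First I would recall that $M = G\cdot M_{\aff} = Z^0\cdot M_{\aff}$, so every element of $M$ can be written as $g\,m$ with $g\in G$ (or $g\in Z^0$) and $m\in M_{\aff}$; this already shows $M = G\,M_{\aff}\,G$ and, since $Z^0$ is central in $G$, that products can be rearranged to push all the affine part together. The key point is that conjugation by $G$ preserves $M_{\aff}=\overline{G_{\aff}}$, because $G_{\aff}$ is normal in $G$ and taking closures is $G$-equivariant; hence $G\,M_{\aff}=M_{\aff}\,G$.

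Next I would show $Z^0\Ker(M_{\aff}) \subseteq \Ker(M)$. Since $\Ker(M_{\aff})$ is an ideal of $M_{\aff}$ and is $G$-stable (being the unique closed $\bigl(G_{\aff}\times G_{\aff}\bigr)$-orbit, and $G$ normalizes $G_{\aff}$, so $G$ permutes such orbits and must fix the unique closed one), the set $Z^0\Ker(M_{\aff})=G\Ker(M_{\aff})$ is a closed ideal of $M=G\,M_{\aff}$: closedness follows from properness arguments (or from $Z^0\Ker(M_{\aff})$ being the image of $Z^0\times\Ker(M_{\aff})$ under the product map, using the induced-space description $M\cong Z^0*_{Z^0\cap G_{\aff}}M_{\aff}$), and the ideal property follows from $M\bigl(G\Ker(M_{\aff})\bigr)M = G\,M_{\aff}\Ker(M_{\aff})M_{\aff}\,G = G\Ker(M_{\aff})G = G\Ker(M_{\aff})$. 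By minimality of $\Ker(M)$ among closed ideals, $\Ker(M)\subseteq Z^0\Ker(M_{\aff})=G\Ker(M_{\aff})=G\Ker(M_{\aff})G$.

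For the reverse inclusions, I would use that $\Ker(M)$ is itself a closed ideal, so $\Ker(M)\cap M_{\aff}$ is a closed ideal of $M_{\aff}$ (one checks it is nonempty because $M\,\Ker(M)\,M=\Ker(M)$ forces $\Ker(M)$ to meet $M_{\aff}$, e.g.\ via $1\in M_{\aff}$ and the ideal property, or via the retraction $M\to M_{\aff}$), whence $\Ker(M_{\aff})\subseteq\Ker(M)\cap M_{\aff}\subseteq\Ker(M)$; then $G\Ker(M_{\aff})G\subseteq G\Ker(M)G=\Ker(M)$. Combining with the previous paragraph and the rearrangement $G\Ker(M_{\aff})G=G\Ker(M_{\aff})=Z^0\Ker(M_{\aff})$ (valid because $G$ normalizes $M_{\aff}$ and $\Ker(M_{\aff})$, and $M=Z^0M_{\aff}$ with $Z^0$ central) gives all the claimed equalities. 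I expect the main obstacle to be the bookkeeping needed to verify that $Z^0\Ker(M_{\aff})$ is genuinely closed and that the various rearrangements $G\Ker(M_{\aff})G = G\Ker(M_{\aff}) = Z^0\Ker(M_{\aff})$ are legitimate; the cleanest route is probably to argue everything inside the induced-space model $M\cong Z^0*_{Z^0\cap G_{\aff}}M_{\aff}$, where $Z^0\Ker(M_{\aff})$ is manifestly the closed sub-bundle $Z^0*_{Z^0\cap G_{\aff}}\Ker(M_{\aff})$.
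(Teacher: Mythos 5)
Your argument is correct and arrives at the same chain of equalities, but the mechanism for the central step differs from the paper's. The paper establishes only the inclusion $G\Ker(M_{\aff})G\subseteq\Ker(M)$ (from $\Ker(M_{\aff})\subseteq\Ker(M)$, essentially as in your third paragraph) and then upgrades it to an equality by observing that both sides are single $\bigl(G\times G\bigr)$-orbits, so containment of one orbit in the other forces them to coincide; the rearrangements $G\Ker(M_{\aff})G=G\Ker(M_{\aff})=Z^0\Ker(M_{\aff})$ are then derived, as in your text, from $G=Z^0G_{\aff}$ and $M=Z^0M_{\aff}=M_{\aff}Z^0$. You instead prove the reverse inclusion $\Ker(M)\subseteq Z^0\Ker(M_{\aff})$ directly, by exhibiting $Z^0\Ker(M_{\aff})$ as a closed ideal of $M$ (closedness via the induced-space model $M\cong Z^0*_{Z^0\cap G_{\aff}}M_{\aff}$) and invoking the minimality of $\Ker(M)$ among closed ideals. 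Your route is more self-contained: it uses only the definition of the Kernel as the minimal closed ideal, whereas the paper's shortcut relies on $\Ker(M)$ being a single $\bigl(G\times G\bigr)$-orbit, a fact the paper states explicitly only for affine monoids; the price is the extra verification that $Z^0\Ker(M_{\aff})$ is closed, which you handle adequately. Two small corrections: the topic sentence of your second paragraph announces the inclusion $Z^0\Ker(M_{\aff})\subseteq\Ker(M)$, but the paragraph actually (and correctly) proves $\Ker(M)\subseteq Z^0\Ker(M_{\aff})$, so the stated direction should be reversed; and the nonemptiness of $\Ker(M)\cap M_{\aff}$ is most cleanly seen by writing any $x\in\Ker(M)$ as $x=zm$ with $z\in Z^0$ and $m\in M_{\aff}$, so that $m=z^{-1}x\in\Ker(M)\cap M_{\aff}$, rather than by the vaguer appeals you sketch in parentheses.
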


\begin{proof}
Since $M=Z^0\cdot M_{{\aff}}$, it follows that $\Ker(M_{{\aff}})\subset
\Ker(M)$. Hence, $G\cdot\Ker(M_{{\aff}})\cdot G\subset \Ker(M)$. Since
both terms in the last inclusion are $\bigl(G(M)\times
G(M)\bigr)$-orbits, the first equality follows.

It is clear that  $\bigl(G\Ker(M_{{\aff}})G\bigr)\cap
M_{\aff}=\Ker(M_{{\aff}})$ and from the decompositions
$M=Z^0\cdot M_{{\aff}}=M_{{\aff}}\cdot Z^0$ and $G=Z^0\cdot G_{\aff}$,  we deduce
that
\[
G\cdot \Ker(M_{{\aff}})\cdot G=Z^0\cdot \Ker(M_{{\aff}})=G\cdot \Ker(M_{{\aff}}).
\]
\smartqed\qed\end{proof}

\subsection{Homogeneous vector bundles}
\label{label:sect22}

 Recall that a
 vector bundle $\rho: E\to A$ is called \emph{homogeneous} if for any $a\in A$, $E
\cong t_a^* E$, where $t_a$ is the translation by  $a$. A line
bundle $L$ is homogeneous if and only if it is algebraically
equivalent to zero (see \cite[Sect.~9]{Mi86}). In particular, the
trivial bundle $\mathcal{O}_A$ is homogeneous.

Let $\xi=(E,\rho, A)$ and $\xi '=(E',\rho ', A)$ be two
homogeneous vector bundles over $A$. If $(f,t_a) :\xi\to \xi '$ is a  homomorphism, then the morphism
 $t_a:A\to A$ is determined by $f:E\to E'$. Thus, when no confusion can arise,
we will write $(f,t_a)$ simply as $f$.
 If $E,
E'$ are isomorphic as vector bundles we write $E\cong_{vb} E'$. It
is well known that $\Hom_A(E,E')\cong H^0\bigl(A, \Hom(E,E')\bigr)$ and
$\End_A(E)=H^0(A, E^*\otimes E)$. If $E$ is indecomposable, the
\emph{algebra  of endomorphisms} $\End_A(E)$ is a
finite-dimensional $\Bbbk$-algebra and
 $E$ is called \emph{simple} if $\End_A(E)= \Bbbk$.
A homogeneous vector bundle $E\to A$ is indecomposable if and only
if $\End _A(E) = \Bbbk\cdot 1_E \oplus N _A(E)$, where $N
_A(E)\subset \End _A(E)$ is the ideal of all the nilpotent
endomorphisms (see \cite{kn:atiyahconn}). Moreover,
$\Aut_A(E)\cong G_m \times U_A(E)$, where $G_m=\bk^*$ and $U_A(E)$
is the unipotent affine subgroup
$\operatorname{Id}+ N_A(E)$.

\begin{remark}\label{rem:endwithcenter}
\label{thm:endasmonoid} In \cite[Lemma 1.1]{kn:miy} Miyanishi
described the algebraic structure
 of $\Aut_{hb}(E)$. In particular, he proved that, as an algebraic
 group, $\Aut_{hb}(E)$ is an extension of $A$ by
$\Aut_A(E)$, that is, we have the exact sequence of algebraic groups
\begin{center}
\mbox{ \xymatrix{ 1\ar@{->} [r] & \Aut_A(E)\ar@{->}[r]
&\Aut_{hb}(E)\ar@{->}[r]& A \ar@{->}[r]& 0 } }
\end{center}

From the Chevalley's decomposition of
$\End_{hb}(E)$ as an algebraic monoid (see  Remark
\ref{remark:strucmonoid} and \cite{kn:ritbr})
we have that
 $\End(E)_{\aff} = \End_A(E)$  fits in the following exact
sequence of algebraic monoids
\begin{center}
\mbox{ \xymatrix{ 1\ar@{->} [r] & \End_A(E)\ar@{->}[r]
&\End_{hb}(E)\ar@{->}[r]& A \ar@{->}[r]& 0. } }
\end{center}
Moreover, if
$\operatorname{Z}^0_{hb}(E)$ is the connected center of
$\End_{hb}(E)$ and
$\operatorname{Z}^0_A(E)=\operatorname{Z}^0_{hb}(E)\cap \End_A(E)$,
then we have the following isomorphisms of algebraic monoids
\[
\begin{split}
\Endhb(E) & =  \Authb(E)\cdot\End_A(E)
=\operatorname{Z}_{hb}^0(E)\cdot\End_A(E)
\\
& \cong_{am} \Authb(E)*_{\Aut_A(E)}\End_A(E)
 \cong_{am}
\operatorname{Z}_{hb}^0(E)*_{\operatorname{Z}^0_A(E)}\End_A(E).
\end{split}
\]
\end{remark}

\begin{remark} The canonical morphism $\pi:\Hom_{hb}(E,E')\to A$,
$\pi(f,t_a)=a$, is  a fibration over $A$ with fiber  $\Hom_A(E,
t^*_aE')$ for any
 $a \in A.$ Indeed,  given
$(f,t_a)\in \Hom_{hb}(E,E')$ there is a homomorphism $f^a:E\to
t_a^*E'$ of vector bundles over $A$, such that the following
diagram
\begin{center}
\mbox{ \xymatrix{
E\ar@{->}[dr]^-{f^a}\ar@/^0.7pc/[drr]^-{f}\ar@/_0.7pc/[ddr] & &\\
& t_a^*E'\ar@{->}[d]\ar@{->}[r] & E'\ar@{->}[d]\\
 & A\ar@{->}[r]_{t_a}& A
 } }
\end{center}
commutes. That is, for any $a\in A$ there is a natural bijection
between $\pi ^{-1}(a)$ and $\Hom_A(E, t^*_aE')$.
 In particular,
$\Hom_A(E,E')=\pi^{-1}(0)$.
\end{remark}

\begin{definition}
\label{defi:sucext}
We say that a vector bundle $E$ of rank $r>1$ is obtained by \emph{successive extensions
of a vector bundle} $R$, of length $s$, if  there exists a filtration
$$R=E_0\subset E_1\subset E_2\subset E_3\subset \dots \subset E_{s-1} \subset
E_s=E$$ such that $E_i/E_{i-1}\cong R$ for $i=1,\dots , s$.
In other words, there exist extensions
\[
\begin{array} {cccrcl}
\rho _1:  & 0&\longrightarrow  E_0\cong R & \stackrel{i_1}{\longrightarrow}
& E_1&\stackrel{p_1}{\longrightarrow} R \longrightarrow 0
\\
\rho _2: & 0&\longrightarrow  E_1&\stackrel{i_2}{\longrightarrow}&  E_2
&\stackrel{p_2}{\longrightarrow} R \longrightarrow 0\\
\vdots & && \vdots&\\
\rho _s: & 0&\longrightarrow  E_{s-1}&\stackrel{i_{s}}{\longrightarrow}&  E_s\cong E
&\stackrel{p_{s}}{\longrightarrow} R \longrightarrow 0
\end{array}
\]
If $R$ is the trivial bundle
$\cO_A$, then $E$ is called \emph{unipotent}.
We call $(\rho _1,\dots,\rho _{s})$ the \emph{extensions associated
with} $E$. Note that  $gr(E)=\oplus E_i/E_{i-1}$, the
graded bundle  associated with this
filtration, is isomorphic to $\oplus^s R$. In particular, $E$ is
unipotent if  $gr(E)=\oplus^s \mathcal{O}_A$.
\end{definition}

\begin{proposition}
\label{propdimend} If $E$ is a vector bundle of rank $r$ obtained
by successive extensions of a vector bundle  $R$, of length $s$,
then $2\leq \dim_\bk \End_A(E)\leq 1+ r(r-1)/2$.
\end{proposition}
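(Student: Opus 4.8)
The plan is to produce two linearly independent global endomorphisms of $E$ over $A$: the identity $1_E$ and a nonzero nilpotent endomorphism coming from the successive extension structure. Since $E$ is obtained by successive extensions of $R$ of length $s\geq 1$ (note that $r>1$ forces $s\geq 1$, and in fact $s\geq 1$ because $E_0=R\subsetneq E_s=E$), there is a filtration $R=E_0\subset E_1\subset\cdots\subset E_s=E$ with each quotient $E_i/E_{i-1}\cong R$. First I would use the very last extension
\[
0\longrightarrow E_{s-1}\stackrel{i_s}{\longrightarrow} E\stackrel{p_s}{\longrightarrow} R\longrightarrow 0,
\]
together with the inclusion $E_0=R\hookrightarrow E_{s-1}\hookrightarrow E$, to build a composite bundle map $\varphi=\bigl(E_0\hookrightarrow E\bigr)\circ p_s\colon E\to R\cong E_0\to E$, i.e.\ $\varphi\in\End_A(E)$.

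Next I would check that $\varphi\neq 0$ and $\varphi\neq\lambda\,1_E$ for any scalar $\lambda$. It is nonzero because $p_s$ is surjective (hence nonzero) and the inclusion $E_0\hookrightarrow E$ is injective. It is not a multiple of the identity because its image lies in the proper sub-bundle $E_0=R$ (recall $\mathrm{rk}\,R<\mathrm{rk}\,E$ since $s\geq1$), so $\varphi$ cannot be surjective; in particular $\mathrm{Ker}\,\varphi=\mathrm{Ker}\,p_s=E_{s-1}\neq 0$, so $\varphi$ is not injective either. Therefore $1_E$ and $\varphi$ are linearly independent in the $\bk$-vector space $\End_A(E)=H^0(A,E^*\otimes E)$, giving $\dim_\bk\End_A(E)\geq 2$.

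An even cleaner phrasing, which I would probably prefer, avoids any case analysis: whatever the structure, $E$ is indecomposable or not; if $E$ is indecomposable homogeneous then Atiyah's result quoted in the introduction already gives $\End_A(E)=\bk\cdot 1_E\oplus N_A(E)$ with $N_A(E)\neq 0$ (since $E\not\cong R=\cO_A^{\,?}$ would make $E$ itself trivial of rank $1$, contradicting $r>1$ when $R=\cO_A$; more precisely $N_A(E)\neq 0$ exactly because $E$ is not a line bundle). But since the statement is general (the bundle $R$ is arbitrary and $E$ need not be homogeneous), the extension-based argument above is the robust route. The only subtlety to be careful about — and the step I expect to be the main obstacle — is confirming that the composite $\varphi$ is genuinely a morphism of vector bundles over $A$ (i.e.\ a regular bundle map, not just a fibrewise-linear set map) and that it is $\bk$-linearly independent from $1_E$ rather than merely set-theoretically distinct; both follow from the fact that $p_s$, $i_s$, and the chain of inclusions $E_0\hookrightarrow E_{s-1}\hookrightarrow\cdots\hookrightarrow E$ are all bundle morphisms and that $\mathrm{im}\,\varphi\subseteq E_0\subsetneq E$. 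Once that is in place, counting dimensions of the span of $\{1_E,\varphi\}$ finishes the proof.
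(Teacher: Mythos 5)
Your proof is correct and follows essentially the same route as the paper: the paper's own argument takes $\varphi = i_s\circ\dots\circ i_1\circ p_s$, i.e.\ exactly your composite ``project to $R$ via $p_s$, then include $R=E_0$ back into $E$ through the chain of inclusions,'' and observes that this is a nonzero non-invertible endomorphism, hence independent of $1_E$. You merely spell out in more detail why $\varphi\neq 0$ and why it is not a scalar multiple of the identity, which the paper leaves implicit.
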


 \begin{proof}
Let
\[
\begin{array} {crcl}
\rho _1:  & 0\longrightarrow  R  \stackrel{i_1}{\longrightarrow} &
E_1&\stackrel{p_1}{\longrightarrow}
R \longrightarrow 0
\\
\vdots & & \vdots&\\
\rho _s: & 0\longrightarrow
E_{s-1}\stackrel{i_{s}}{\longrightarrow}&  E_s\cong E
&\stackrel{p_{s}}{\longrightarrow} R \longrightarrow 0
\end{array}
\]
 be the extensions associated with $E$. The
composition $\varphi= i_{s}\circ\dots \circ i_2\circ i_1 \circ
p_{s}\neq 0$
 defines a non invertible endomorphism of $E$. Therefore,
 $2\leq \dim_\bk\End_A(E)$.

As in \cite[Prop. 1.1.9]{kn:lebp1} we have that  $\dim
\End_A(E)\leq 1+ r(r-1)/2$. Note that $1+ r(r-1)/2$ is the dimension of
the upper triangular matrices in $\End_\bk(E_a)$.
  Indeed, the fiber $E_a$ has a flag
invariant under $e_a(\End_A(E))$ where $e_a:\End_A(E)\to
\End_\bk(E_a)$, $e_a(f)=f|_{_{E_a}}$ is the restriction to the
fiber $a\in A$. Hence,
\[
\dim \End_A(E)\leq 1+ r(r-1)/2.
\]
 \smartqed\qed\end{proof}

\begin{remark} Let  $\rho:E\to A$ be a vector bundle. From \cite{kn:miy,kn:Mu78}  we have:
\label{thm:miyaandmukai}
\begin{enumerate}
\item  If $E$ is an unipotent vector bundle, then $E$ is  homogeneous.

 \item
 $E$ is an indecomposable homogeneous vector bundle if and only
if $E$  is obtained by successive
extensions of a homogeneous line bundle $L$.   Moreover, one can choose the
associated filtration  $0\subsetneq E_1=L\subsetneq \dots
\subsetneq E_i\subsetneq \dots \subsetneq E_n=E$ in such a way
that $f(E_i)\subset E_i$ for all $f\in\Authb(E)$, i.e. the filtration is $\Authb(E)$-stable.

Recall that in this case $E\cong L\otimes F$,
where $L\in \operatorname{Pic}^0(A)$ and
$F$ is an indecomposable unipotent vector bundle.

 \item  $E$ is homogeneous if and only if  $E$
decomposes as a direct sum $E=\bigoplus L_i\otimes F_i$, where
$L_i\in \operatorname{Pic}^0(A)$ and $F_i$ is a unipotent vector
bundle.

\end{enumerate}
\end{remark}

\begin{theorem}
\label{thm:enhdvb} Let $E\to A$ be a homogeneous vector bundle of
rank $r$. Then $\pi: \End_{hb}(E)\to A$ is a homogeneous vector bundle
with fiber isomorphic to $\End_A(E)$. Moreover, if $E$ is
 indecomposable, then
 $\operatorname{rk}(\Endhb(E))\leq  1+ r(r-1)/2.$
\end{theorem}

\begin{proof}
Recall that $\Endhb(E)\cong_{am} \Authb(E)*_{\Aut_A(E)}\End_A(E)$
(see Remark \ref{rem:endwithcenter}). From general properties of
the induced action and the fact that $\End_A(E)$ is a finite
dimensional algebra,  it follows that $\Endhb(E)\to A$ is a vector
bundle (see for example \cite{kn:Ti06}). Moreover, since $\Authb(E)$
acts by left multiplication on
$\Endhb(E)$ we have that $\Endhb(E)$ is homogeneous. Indeed, given
$a\in A$, there exists $(f, t_a)\in  \Authb(E)$ and, if
$\ell_f:\End_{hb}(E)\to \End_{hb}(E)$ denotes the isomorphism
$\ell_f(h)=f\circ h$ for $h\in\End_A(E)$, then
$\alpha(\ell_f)=t_a$.

The second part follows from Proposition \ref{propdimend}.
\smartqed\qed\end{proof}

\begin{remark} In \cite[Proposition 6.13]{kn:Mu78} Mukai  proved that
  homogeneous vector bundles are Gieseker-semistable (see
  \cite{kn:gieseker}). Moreover, a homogeneous vector
bundle $E$ is Gieseker-stable if and only if it is simple. It
follows from Proposition \ref{propdimend}  that a homogeneous
vector bundle $E$ is Gieseker-stable if and only if it is a
homogeneous line bundle.
\end{remark}

\section{The endomorphisms monoid of a homogeneous vector bundle}
\label{sec:endo}

The affine algebraic  group  $\Aut_A(E)$ acts in at least two
different ways on $\End_A(E)$, either by post-composing, $f\cdot
h=f\circ h$,  or by pre-composing, $f\cdot h= h\circ f^{-1}$, with
$f\in \Aut_A(E)$ and  $h\in \End_A(E)$. This allow us to endow
$\Endhb(E)$ with two structures of homogeneous vector bundle.
However, since
$$\Authb(E)*_{\Aut_A(E)}\End_A(E)\cong_{vb}
\operatorname{Z}_{hb}^0(E)*_{\operatorname{Z}_{A}^0(E)}\End_A(E),$$
one can prove that, in fact, these structures are the same.
Instead of proving this in full details, we give  slightly more
general results relating the structures of vector bundle of
$\Hom_{hb}(E,E')$ (see Theorems \ref{thm:endmon01} and \ref{thm:endmon02}).

\begin{proposition}\label{lemma:hombundles1}
Let $E$ and $E'$ be two  vector bundles over $A$. Suppose $E'$ is
homogeneous. The inclusion
$\operatorname{Z}_{hb}^0(E')\hookrightarrow \Authb(E')$ induces an
isomorphism of the homogeneous vector bundles
\[
\operatorname{Z}_{hb}^0(E')*_{\operatorname{Z}_{A}^0(E')}\Hom_A(E,E')\cong_{vb}
\Authb(E')*_{\Aut_A(E')}\Hom_A(E,E'),
\]
where   $\operatorname{Z}_{A}^0(E')$ and $\Aut_A(E')$ act on
$\Hom_A(E,E')$ by post-composing.
\end{proposition}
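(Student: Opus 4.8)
The plan is to exhibit the natural candidate for the isomorphism and to check that it is a bijective morphism of vector bundles over $A$; since both sides are vector bundles over the same base and the map will cover $\id_A$, this suffices.

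Throughout write $G=\Authb(E')$, $H=\Aut_A(E')$, let $Z=\operatorname{Z}_{hb}^0(E')$ be the connected center of $G$, and set $Z_0=\operatorname{Z}_A^0(E')=Z\cap H$. By Remarks \ref{thm:endasmonoid} and \ref{rem:endwithcenter}, $\Endhb(E')$ is a non-singular irreducible algebraic monoid with unit group $G$ and affine part $\End_A(E')$; applying the Chevalley-type decomposition of Remark \ref{remark:strucmonoid} (and the argument of Corollary \ref{coro:ker}) to $G$ gives $G=Z\cdot H$, while the Albanese morphism of $G$ induces isomorphisms $G/H\cong A$ and $Z/Z_0\cong A$ that are compatible with the inclusion $Z\hookrightarrow G$ and with $\id_A$. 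Note that $H$ acts on $\Hom_A(E,E')$ by post-composition $f\cdot h=f\circ h$ — this is well defined since $f\colon E'\to E'$ and $h\colon E\to E'$ fix the base, and it is linear in $h$ — and $Z_0$ acts by restriction; as $\Hom_A(E,E')$ is finite dimensional, both $Z*_{Z_0}\Hom_A(E,E')$ and $G*_H\Hom_A(E,E')$ are homogeneous vector bundles over $A$, exactly as in the proof of Theorem \ref{thm:enhdvb}.

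The inclusion $\iota\colon Z\hookrightarrow G$ is equivariant for the $Z_0$-action on $Z$ and the $H$-action on $G$, and together with $\id_{\Hom_A(E,E')}$ it descends, by the functoriality of the induced-space construction in the pair $(Z,Z_0)\to(G,H)$, to a morphism
\[
\Phi\colon Z*_{Z_0}\Hom_A(E,E')\longrightarrow G*_H\Hom_A(E,E'),\qquad [z,h]\longmapsto[z,h].
\]
Under the identifications $Z/Z_0\cong A\cong G/H$ above, $\Phi$ commutes with the projections to $A$, i.e.\ it covers $\id_A$; and on the fibre over $a\in A$ — described, after choosing any $z\in Z$ with Albanese image $a$, as $\{[z,h]:h\in\Hom_A(E,E')\}$ with $[z,h]+[z,h']=[z,h+h']$ and $\lambda\,[z,h]=[z,\lambda h]$ — the map $\Phi$ is the identity, so $\Phi$ is a morphism of vector bundles over $A$. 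For surjectivity, given $[g,h]$ in the target write $g=zf$ with $z\in Z$, $f\in H$ (possible since $G=Z\cdot H$); since $f\circ h\in\Hom_A(E,E')$ we get $[g,h]=[zf,h]=[z,f\circ h]=\Phi\bigl([z,f\circ h]\bigr)$. For injectivity, if $\Phi([z_1,h_1])=\Phi([z_2,h_2])$ then $(z_2,h_2)=(z_1f^{-1},f\circ h_1)$ for some $f\in H$, whence $f^{-1}=z_1^{-1}z_2\in Z\cap H=Z_0$ and therefore $[z_1,h_1]=[z_2,h_2]$ already in $Z*_{Z_0}\Hom_A(E,E')$.

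Thus $\Phi$ is a bijective morphism of vector bundles over $A$ covering $\id_A$; on each fibre it is a bijective linear map, hence a linear isomorphism, so $\Phi$ is an isomorphism of vector bundles, which is the assertion. The only points that need care are the structural inputs — that $\Aut_A(E')$ is precisely the affine part of $\Authb(E')$, that $Z\cap H$ is the group $\operatorname{Z}_A^0(E')$ occurring in the statement, and the compatibility of the two Albanese identifications with $A$ — all of which are provided by (the results behind) Remarks \ref{remark:strucmonoid}, \ref{thm:endasmonoid} and \ref{rem:endwithcenter}; everything else is the formal behaviour of induced spaces.
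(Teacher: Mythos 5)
Your proposal is correct and follows essentially the same route as the paper: both exhibit the natural morphism $Z*_{Z_0}\Hom_A(E,E')\to G*_H\Hom_A(E,E')$ induced by the inclusion of the connected center and conclude that a bijective morphism of vector bundles over $A$ is an isomorphism. The only difference is one of emphasis — you spell out the surjectivity (via $G=Z\cdot H$) and injectivity (via $Z\cap H=Z_0$) that the paper asserts without detail, while the paper devotes more space to verifying that both induced bundles are homogeneous.
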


\begin{proof}

Recall that the induced space
$P=\Authb(E')*_{\Aut_A(E')}\Hom_A(E,E')$ is a vector bundle over
$\Authb(E')/\Aut_A(E')=A$ and that a vector bundle $R$ is homogeneous if the restriction map
$Aut(R)\to A$ is surjective. It is clear that the canonical action of $\Authb(E')$ over $P$
induces a morphism of  groups $\varphi: \Authb(E')\to \Authb(P)$,
$\varphi( f,t_a)=(\widetilde{f}, t_a),$ where
$\widetilde{f}\bigl(\bigl[(h,t_b), h'\bigr]\bigr)= \bigl[(f\circ
h,t_{b+a}), h'\bigr]$. Since the  projection
$\Authb(E')\to A$ defined as $(f,t_a)\mapsto a$, is surjective, it follows
that the canonical projection $\Authb(P)\to A $ is also
surjective. In other words, the vector bundle $P$ is homogeneous.
The same conclusion can be drawn for
$Q=\operatorname{Z}_{hb}^0(E')*_{\operatorname{Z}_{A}^0(E')}\Hom_A(E,E')$.
The inclusion $\operatorname{Z}_{hb}^0(E')\hookrightarrow
\Authb(E')$ induces a morphism of homogeneous vector bundles $Q\to
P$ which is bijective, this clearly forces $Q\cong_{vb}P$.
\smartqed\qed\end{proof}

\begin{theorem}
\label{thm:endmon01} Under the same hypothesis of Proposition
\ref{lemma:hombundles1}, consider the projection
$\pi:\Hom_{hb}(E,E')\to A$, $\pi (f,t_a)=a$, and let
\[
\begin{array}{l}
\pi':\Authb(E')*_{\Aut_A(E')}\Hom_A(E,E')\to A\\
  \pi'(g, h)=[g]\in
\Authb(E')/\Aut_A(E')\cong A
\end{array}
\]
 be the canonical
projection. Then
 there exists a bijection
 \[ \phi : \Authb(E')*_{\Aut_A(E')}\Hom_A(E,E') \to \Hom _{hb}(E,E')
\]
 such that the following diagram
\begin{equation}
\label{eqn:diag2}\mbox{
\xymatrix{
\Authb(E')*_{\Aut_A(E')}\Hom_A(E,E')\ar@{->} [rr]^-{\phi}\ar@{->}[rd]_{\pi'} & &
\Hom _{hb}(E,E')\ar@{->}[dl]^{\pi}\\
& A&
}
}
\end{equation}
is commutative and $\phi$ is linear in the fibers. Moreover,
  $\phi$ induces the structure of a homogeneous vector
bundle on $\Hom_{hb}(E,E')$
 and
$$\Hom _{hb}(E,E')
\cong_{vb}\operatorname{Z}_{hb}^0(E')*_{\operatorname{Z}_{A}^0(E')}\Hom_A(E,E').$$
\end{theorem}

\begin{proof} From Proposition \ref{lemma:hombundles1} it is
  sufficient to prove the existence of $\phi$. Note that given $(g,t_a) \in
  \Authb(E')$ and $h\in \Hom_A(E,E')$
the following  diagram is commutative:

\begin{center}
\mbox{ \xymatrix{ E\ar@{->} [rr]^-{h}\ar@{->}[dr] & & E'\cong
t^*_aE'\ar@{->} [r]^-{g}\ar@{->}[dl]  &
E'\ar@{->}[d]\\
& A\ar@{->}[rr]_{t_a} & &  A
}
}
\end{center}

If $\varphi :\Authb(E')\times \Hom_A(E,E')\to \Hom _{hb}(E,E')$ is
given by $\varphi\bigl((g,t_a),h\bigr)=(g\circ h,t_a)$, then
$\varphi$ is constant on the $\Aut_A(E')$-orbits, and hence
induces a homomorphism  $$\phi:
\Authb(E')*_{\Aut_A(E')}\Hom_A(E,E')\to \Hom(E,E').$$ By
construction, $\phi$ makes the diagram \eqref{eqn:diag2}
commutative.

To prove the surjectivity of $\phi$ let  $(f,t_a)\in \Hom(E,E')$.
 Since $E'$ is homogeneous,
there exists $(g,t_{-a})\in \Authb(E')$ with $a\in A$ such that the
following  diagram is commutative:
\begin{center}
\mbox{ \xymatrix{
E\ar@{->}[r]^f\ar@{->}[d] & E'\cong t^*_aE'\ar@{->}[r]^g\ar@{->}[d] &E'\ar@{->}[d]\\
A\ar@{->}[r]^{t_{a}}
\ar@/_1pc/[rr]_{t_0=\operatorname{Id}}&A\ar@{->}[r]^{t_{-a}}& A }
}
\end{center}

Hence, the composition $(g,t_{-a})\circ (f,t_a)=(g\circ f, t_0)$
defines a homomorphism $g\circ f:E\to E'$ of vector bundles over
$A$. Moreover,
\[
\phi\bigl(\bigl[(g^{-1},t_a), g\circ f\bigr]\bigl)= (g^{-1}\circ g\circ
f, t_a)= (f,t_a),
\]
and hence $\phi$ is surjective.

We claim that $\phi$ is injective. Indeed, if
\[
\bigl[(g_1,t_{a_1}),h_1\bigr], \bigl[(g_2,t_{a_2}),h_2\bigr]\in
\Authb(E)*_{\Aut_A(E)}\Hom_A(E,E')
\]
are such that $ \phi\bigl(\bigl[(g_1,t_{a_1}),h_1)\bigr]\bigr)
=\phi\bigl(\bigl[(g_2,t_{a_2}),h_2\bigr] \bigr)$, then, by
definition of $\phi $, we have that $g_1\circ h_1=g_2\circ h_2$
and  $t_{a_1}=t_{a_2}$. Therefore, $a_1=a_2$ and hence
$g_2^{-1}\circ g_1\in \Aut_A(E)$. This completes the proof.
\smartqed\qed\end{proof}

Theorem \ref{thm:endmon01} states that if $E'$ is homogeneous,
then $\Hom_{hb}(E,E')$ is homogeneous.
Under the assumptions of Proposition \ref{lemma:hombundles1}  with ``$E'$ homogeneous'' replaced
with ``$E$ is homogeneous'' and ``post-composing'' by ``pre-composing
with the inverse''
we obtain an analogue of Theorem
\ref{thm:endmon01} which may be proved in much the same way.

\begin{theorem}
\label{thm:endmon02}
 Let $E,E'$ be vector bundles over $A$. If $E$ is homogeneous,
 then there exists a bijection
 \[
\psi : \Authb(E)*_{\Aut_A(E)}\Hom_A(E,E') \longrightarrow
\Hom(E,E')
\]
 such that the following diagram is
commutative
\begin{center}
\mbox{
\xymatrix{
\Authb(E)*_{\Aut_A(E)}\Hom_A(E,E')\ar@{->} [r]^-{\psi}\ar@{->}[d]_{\pi'}  &
\Hom(E,E')\ar@{->}[d]^{\pi}\\
 A\ar@{->} [r]_-{-\operatorname{Id}}& A
}
}
\end{center}
and $\psi$ is linear when restricted to the fibers. Moreover,
$\psi$ induces a  structure of homogeneous vector bundle on
 $\Hom _{hb}(E,E')$ and
$$\Hom
_{hb}(E,E') \cong_{vb}
\operatorname{Z}_{hb}^0(E)
*_{\operatorname{Z}_{A}^0(E)}\Hom_A(E,E').$$
\end{theorem}

 If $E,E'$ are both homogeneous vector bundles, it
is not clear \emph{a priori} that the two  vector bundle structures
on $\Hom_{hb}(E,E')$ given in Theorem \ref{thm:endmon01} and
Theorem \ref{thm:endmon02} are the same. In order to prove that the
structures coincide, we first deal with the case of $\End_{hb}(E)$,
for $E$ a homogeneous vector bundle.

Consider  the vector bundle
 $P=\operatorname{Z}_{hb}^0(E)*_{\operatorname{Z}_{A}^0(E)}\End_A(E)$,
 where $\operatorname{Z}_{A}^0(E)$ acts on $\End_A(E)$ by
 post-composing and let  $Q$ be the vector bundle $
Q=\operatorname{Z}_{hb}^0(E)*_{\operatorname{Z}_{A}^0(E)}\End_A(E)$,
where $\operatorname{Z}_{A}^0(E)$ acts on $\End_A(E)$  by
pre-composing with the inverse.
An easy calculation shows that the morphism $\xi:
\operatorname{Z}_{hb}^0(E)\times \End_A(E)\to Q$ given by
$\xi\bigl( (f,t_a),h\bigr)=\bigr[(f^{-1},t_{-a}), h\bigr]$, with $ h\in \End_A(E)$, induces
an isomorphism of vector bundles $\widetilde{\xi}: P \to
(\operatorname{-Id})^* Q$ and hence we have the following corollary.

\begin{corollary}\label{cor:endbundlesim1}
If $E$ is a homogeneous vector bundle, then the
structures of homogeneous vector bundle defined on $\Endhb(E)$ by
$\phi$ in Theorem \ref{thm:endmon01} and $\psi$ in Theorem
\ref{thm:endmon02} are isomorphic. 
\end{corollary}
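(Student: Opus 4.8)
The plan is to deduce the corollary directly from Proposition \ref{prop:endbundlesim} together with the fact that the structure of homogeneous vector bundle given by $\phi$ in Theorem \ref{thm:endmon01} is, by that theorem, isomorphic to the one defined on $P=\operatorname{Z}_{hb}^0(E)*_{\operatorname{Z}_{A}^0(E)}\End_A(E)$ via post-composition, while the structure given by $\psi$ in Theorem \ref{thm:endmon02} is isomorphic to the one defined on $Q=\operatorname{Z}_{hb}^0(E)*_{\operatorname{Z}_{A}^0(E)}\End_A(E)$ via pre-composition. Thus the two candidate vector bundle structures on $\Endhb(E)$ are represented by $P$ and $Q$ respectively, and by Proposition \ref{prop:endbundlesim} we have $P\cong_{vb}(\operatorname{-Id})^*Q$.

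Next I would observe that pulling back along $\operatorname{-Id}:A\to A$ does not change the isomorphism type of a \emph{homogeneous} vector bundle. Indeed, $\operatorname{-Id}$ factors as $t_0\circ(\operatorname{-Id})$ and, more to the point, for a homogeneous bundle $G\to A$ one has $(\operatorname{-Id})^*G\cong_{vb} G$: this is immediate from the description of homogeneous bundles as $L\otimes F$ with $L\in\operatorname{Pic}^0(A)$ and $F$ unipotent, since $(\operatorname{-Id})^*L\cong L^{-1}$ is again in $\operatorname{Pic}^0(A)$ and $(\operatorname{-Id})^*$ preserves unipotence and hence the homogeneity needed — alternatively, because $Q$ is homogeneous (as established in the proofs of Theorems \ref{thm:endmon01} and \ref{thm:endmon02}), all its translates agree, and $(\operatorname{-Id})^*Q$ is obtained from $Q$ by a base automorphism that lies in the relevant orbit. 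Combining, $P\cong_{vb}(\operatorname{-Id})^*Q\cong_{vb}Q$, which is exactly the assertion that the $\phi$- and $\psi$-structures on $\Endhb(E)$ coincide up to isomorphism of vector bundles.

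The main obstacle is pinning down precisely why $(\operatorname{-Id})^*Q\cong_{vb}Q$; one must be a little careful that the isomorphism is one of vector bundles over $A$ (not just over the point, or up to a twist). The cleanest route is to invoke homogeneity of $Q$ directly: since $Q\to A$ is homogeneous, $t_a^*Q\cong_{vb}Q$ for every $a$, and the automorphism $\operatorname{-Id}$ of $A$ carries $Q$ to a bundle which, fibrewise and globally, is isomorphic to $Q$ because the construction of $Q$ as an induced space is symmetric under $g\mapsto g^{-1}$ in $\operatorname{Z}_{hb}^0(E)$ — this is exactly the content of the computation in the proof of Proposition \ref{prop:endbundlesim}, where $\widetilde\xi$ is built. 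So in fact no new work is needed: Proposition \ref{prop:endbundlesim} already produces the isomorphism $\bar\xi:P\to (\operatorname{-Id})^*Q$, and composing with the canonical identification $(\operatorname{-Id})^*Q\cong_{vb}Q$ coming from homogeneity of $Q$ yields the desired vector bundle isomorphism between the $\phi$-structure and the $\psi$-structure on $\Endhb(E)$. $\qed$
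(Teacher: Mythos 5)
Your argument has a genuine gap, and it comes in two linked pieces. First, you identify the $\psi$-structure on $\Endhb(E)$ with $Q$ itself, but the bijection $\psi$ of Theorem \ref{thm:endmon02} lies over $-\operatorname{Id}$ rather than over the identity --- this is exactly what the commutative diagram accompanying that theorem records, namely $\pi\circ\psi = (-\operatorname{Id})\circ \pi''$. Consequently the vector bundle structure that $\psi$ transports onto $\Endhb(E)$, viewed over $A$ via the original projection $\pi$, is that of $(-\operatorname{Id})^*Q$, not of $Q$. Once this is corrected, the corollary is \emph{literally} the statement of Proposition \ref{prop:endbundlesim}, namely $P\cong_{vb}(-\operatorname{Id})^*Q$, and no further step is needed; this is why the paper supplies no separate argument.

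Second, the bridge you build to pass from $(-\operatorname{Id})^*Q$ back to $Q$ is false. Homogeneity of a bundle $G\to A$ means $t_a^*G\cong G$ for all \emph{translations} $t_a$, and $-\operatorname{Id}$ is not a translation, so homogeneity gives no control over $(-\operatorname{Id})^*G$. For $G=L\in\operatorname{Pic}^0(A)$ one has $(-\operatorname{Id})^*L\cong L^{-1}$, which is not isomorphic to $L$ unless $L$ is $2$-torsion; your own parenthetical remark that ``$(-\operatorname{Id})^*L\cong L^{-1}$ is again in $\operatorname{Pic}^0(A)$'' concedes the point, since membership in $\operatorname{Pic}^0(A)$ does not yield an isomorphism with $L$. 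Concretely, take $E=L$ a homogeneous line bundle with $L^{\otimes 2}\not\cong\cO_A$: then $\Endhb(L)\cong_{vb}L$ carries the $\phi$-structure, so $P\cong_{vb}L$, while Proposition \ref{prop:endbundlesim} forces $Q\cong_{vb}(-\operatorname{Id})^*P\cong_{vb}L^{-1}$. Hence $P\not\cong_{vb}Q$ and $(-\operatorname{Id})^*Q\not\cong_{vb}Q$ in this case, and the chain $P\cong(-\operatorname{Id})^*Q\cong Q$ you propose cannot close. Your two misreadings point in opposite directions and would cancel if the second were true, but as it rests on a false lemma the proof as written does not establish the corollary; the repair is simply to identify the $\psi$-structure correctly as $(-\operatorname{Id})^*Q$ and quote the proposition.
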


\begin{remark}\label{lem:endbundlesim1} Suppose now that  $E=\bigoplus_{i} E_i$ and $E'=\bigoplus_j
E_j'$ are two homogeneous vector bundles.
Consider  $\Hom_{hb}(E,E')$ and $\Hom_{hb}(E_i,E'_j)$, along with their structure as
homogeneous vector bundles coming from either Theorem
\ref{thm:endmon01} or Theorem  \ref{thm:endmon02}. Then
\[
\Hom_{hb}(E,E')\cong_{vb}  \bigoplus_{i,j} \Hom_{hb} (E_i,E_j').
\]
In particular,
\[
\Endhb(E)\cong_{vb}  \bigoplus_{i,j}\Hom_{hb}(E_i,E_j).
\]
Indeed, the canonical inclusions $\varphi_{ij}:
\Hom_{hb}(E_i,E'_j)\hookrightarrow \Hom_{hb}(E,E')$ are  morphisms of
vector bundles and
 induce an isomorphism
 \[ \varphi:\bigoplus_{i,j} \Hom_{hb} (E_i,E_j')\to
 \Hom_{hb}(E,E').
 \]
\end{remark}

\begin{theorem}
\label{thm:endmon1} Let $E$ and $E'$ be homogeneous vector
bundles. The structures of vector
bundle on $\Hom_{hb}(E,E')$ given in Theorem \ref{thm:endmon01}
and  in Theorem \ref{thm:endmon02} are isomorphic.
\end{theorem}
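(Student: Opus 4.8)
The plan is to reduce the statement to the decomposable pieces using Proposition~\ref{prop:endbundlesim1}, and then to handle the indecomposable case using Corollary~\ref{cor:endbundlesim1}. Indeed, by Remark~\ref{thm:miyaandmukai} we may write $E\cong_{vb}\bigoplus_i E_i$ and $E'\cong_{vb}\bigoplus_j E_j'$ with each $E_i$ and each $E_j'$ indecomposable homogeneous vector bundles. Applying Proposition~\ref{prop:endbundlesim1} (and the remark following it, which records the analogous decomposition for the second structure of vector bundle), both structures on $\Hom_{hb}(E,E')$ split compatibly as $\bigoplus_{i,j}\Hom_{hb}(E_i,E_j')$, so it suffices to treat each summand $\Hom_{hb}(E_i,E_j')$ separately. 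Thus we are reduced to proving: if $E$ and $E'$ are \emph{indecomposable} homogeneous vector bundles, then the structure of vector bundle on $\Hom_{hb}(E,E')$ coming from Theorem~\ref{thm:endmon01} (the $\Aut_A(E')$-action by post-composition on $\Hom_A(E,E')$) agrees with that of Theorem~\ref{thm:endmon02} (the $\Aut_A(E)$-action by pre-composition).

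For the indecomposable case, first I would invoke the equivalence of categories coming from Mukai's work: the full subcategory of indecomposable homogeneous bundles obtained by successive extensions of a fixed $L\in\Pic^0(A)$ is equivalent to finite-length $\cO_{\widehat A,\widehat x}$-modules. If $E$ and $E'$ correspond to different points $\widehat x\neq\widehat x'$ of $\widehat A$, then $\Hom_A(E,E')=0$, so $\Hom_{hb}(E,E')$ is the zero bundle over $A$ (its fibers are canonically $\Hom_A(E,t_a^*E')$, which likewise vanishes because $t_a^*E'$ still lies over $\widehat x'$), and there is nothing to prove. So we may assume $E$ and $E'$ both lie in $\cH_{L,A}$ for the same $L$. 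Tensoring with $L^{-1}$ — which is an equivalence of categories carrying each structure of vector bundle to the corresponding one, since it intertwines $\Aut_A(E)$ with $\Aut_A(L^{-1}\otimes E)$ compatibly — lets us assume $L=\cO_A$, i.e.\ $E$ and $F$ are unipotent. Now, as in the proof of Corollary~\ref{cor:endbundlesim1}, the key point is that $\Endhb$ of a homogeneous bundle was computed via the \emph{connected center} $\operatorname{Z}_{hb}^0$, and one uses the pattern of Proposition~\ref{prop:endbundlesim}: the two structures are related by pullback along $-\Id:A\to A$ composed with an isomorphism, the crucial input being that the connected center acts centrally, so pre- and post-composition by elements of $\operatorname{Z}_A^0$ coincide.

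Concretely, I would argue as follows. Both bundles $\Hom_{hb}(E,E')$ (from Theorem~\ref{thm:endmon01}) and its "pre-composition" counterpart are, by Proposition~\ref{lemma:hombundles1} and Proposition~\ref{lemma:hombundles2}, isomorphic to the bundles built from $\operatorname{Z}_{hb}^0(E')\times\Hom_A(E,E')$ and $\operatorname{Z}_{hb}^0(E)\times\Hom_A(E,E')$ respectively, modulo the appropriate $\operatorname{Z}_A^0$-action. Mimicking the morphism $\xi$ of Proposition~\ref{prop:endbundlesim}, I would define $\xi\bigl((f,t_a),h\bigr)=\bigl[(f^{-1},t_{-a}),h\bigr]$ and check, using that elements of $\operatorname{Z}_{hb}^0$ commute with all homomorphisms of homogeneous bundles (so in particular $z\circ h=h\circ z$ makes sense when source and target agree, and more generally $z'\circ h=h\circ z$ for corresponding central elements $z,z'$ of $E,E'$ under the Mukai equivalence), that $\xi$ is constant on orbits and descends to an isomorphism identifying the two bundle structures — possibly up to the automorphism $-\Id$ of the base, which does not affect the isomorphism class of the (homogeneous!) bundle. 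The main obstacle I anticipate is exactly this bookkeeping with the two connected centers $\operatorname{Z}_{hb}^0(E)$ and $\operatorname{Z}_{hb}^0(E')$: one must verify that an automorphism $g\in\operatorname{Z}_{hb}^0(E)$ over $t_a$ can be matched with one over $E'$ over the same translation, and that post-composing by the latter equals pre-composing by $g^{-1}$ on $\Hom_A(E,E')$, which is where centrality (and the fact that, for indecomposable $E$, $\operatorname{Z}_A^0(E)=\bk^\ast\cdot 1_E$ consists of scalars) does the work. Once $E=E'$ this is precisely Corollary~\ref{cor:endbundlesim1}; the general indecomposable case follows the same template with scalars acting identically on both sides.
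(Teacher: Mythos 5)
Your reduction to indecomposable summands via Proposition \ref{prop:endbundlesim1} (and its pre-composition analogue) is sound, and your dispatching of the case of distinct underlying line bundles is fine. The gap is in the remaining case, which is the actual content of the theorem: for indecomposable $E\not\cong E'$ in the same class $\mathcal H_{L,A}$ you never construct the isomorphism. The template of Proposition \ref{prop:endbundlesim} does not transfer directly, because there the two quotients $P$ and $Q$ are both built from the \emph{same} group $\operatorname{Z}^0_{hb}(E)$ acting on $\End_A(E)$, so $\xi\bigl((f,t_a),h\bigr)=\bigl[(f^{-1},t_{-a}),h\bigr]$ typechecks and centrality of $z$ inside $\End_A(E)$ gives invariance. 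For $\Hom_{hb}(E,E')$ the post-composition structure is a quotient of $\operatorname{Z}^0_{hb}(E')\times\Hom_A(E,E')$ while the pre-composition structure is a quotient of $\operatorname{Z}^0_{hb}(E)\times\Hom_A(E,E')$: to write down any analogue of $\xi$ you need, for each $(g,t_a)\in\operatorname{Z}^0_{hb}(E)$, a matched $(g',t_{\pm a})\in\operatorname{Z}^0_{hb}(E')$ with $h\circ g^{-1}=g'\circ h$ for all $h\in\Hom_A(E,E')$, chosen algebraically in $a$. You name this as ``the main obstacle'' and then assert that centrality and $\operatorname{Z}^0_A(E)=\bk^*\cdot 1_E$ do the work, but scalars only account for the fiber over $0\in A$; over $a\neq 0$ the fibers of $\operatorname{Z}^0_{hb}(E)\to A$ and $\operatorname{Z}^0_{hb}(E')\to A$ are torsors under different groups with no given identification, and whether these two extensions of $A$ act compatibly on $\Hom_A(E,E')$ is precisely what has to be proved. (Also, your fallback that pullback by $-\Id$ does not change the isomorphism class of a homogeneous bundle is false in general: $(-\Id)^*L\cong L^{-1}$ for $L\in\Pic^0(A)$.)

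The paper closes exactly this gap with a different device: it forms $E\oplus E'$, decomposes $\Endhb(E\oplus E')$ by Proposition \ref{prop:endbundlesim1} so that $\Hom_{hb}(E,E')$ appears as a direct summand, and then applies Corollary \ref{cor:endbundlesim1} to the single bundle $E\oplus E'$. A central automorphism of $E\oplus E'$ restricts to automorphisms of $E$ and of $E'$ over the same translation, and its centrality in $\End_A(E\oplus E')$ is exactly the compatibility $h\circ (z|_E)=(z|_{E'})\circ h$ that your argument is missing; the isomorphism $\bar{\xi}$ of Proposition \ref{prop:endbundlesim} then restricts to the summand $\Hom_{hb}(E,E')$. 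If you want to keep your indecomposable-by-indecomposable strategy, run it through $\Endhb(E_i\oplus E_j')$ in the same way; as written, the key step is asserted rather than proved.
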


\begin{proof}

Consider the vector bundle $E\oplus E'$. From Remark \ref{lem:endbundlesim1} we have that
\[
\End_{hb}(E\oplus E') \cong_{vb} \Hom_{hb}(E,E') \oplus
\Hom_{hb}(E',E) \oplus \End_{hb}(E)\oplus \Endhb(E').
\]
Applying  Corollary \ref{cor:endbundlesim1} we obtain an isomorphism between the structures of
$\Hom_{hb}(E,E')$  and $\Hom_{hb}(E',E)$ given in Theorem
\ref{thm:endmon01} and \ref{thm:endmon02}.
\smartqed\qed\end{proof}

\section{Relationship between the structure of a homogeneous bundle
and its endomorphisms monoid} \label{sec:endmon}

\subsection{The  homogeneous vector bundle as an induced space}

We begin this section by showing that a homogeneous vector bundle
$E\to A$ is obtained  as an extension of the fiber $E_0$ over $0$
by the principal bundle $\Authb(E)\to A$.

\begin{theorem}
\label{thm:structure} Let $E\to A$ be a homogeneous  vector
bundle. Then, as vector bundles over $A$,
\[
E\cong _{vb}\Authb (E) *_{\Aut_A(E)}E_0\cong
_{vb}\operatorname{Z}^0_{hb}(E) *_{\operatorname{Z}^0_A(E)}E_0.
\]
\end{theorem}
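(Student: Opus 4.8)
The plan is to realize $E$ as an induced space by exhibiting, for each point $a\in A$, a canonical identification of the fiber $E_a$ with $E_0$ up to the ambiguity of $\Aut_A(E)$, and then to package these identifications into an isomorphism of vector bundles. The key observation is that $\Authb(E)$ is a principal $\Aut_A(E)$-bundle over $A=\Authb(E)/\Aut_A(E)$ (by the Miyanishi exact sequence recalled in Section~\ref{sec:endo}), and that any $(g,t_a)\in\Authb(E)$ restricts to a linear isomorphism $g_0:E_0\to E_a$ between fibers. So the first step is to define a morphism $\Psi:\Authb(E)\times E_0\to E$ by $\Psi\bigl((g,t_a),v\bigr)=g(v)$, where we regard $v\in E_0$ and $g(v)\in E_{a}$; this is a morphism of varieties since the action of $\Authb(E)$ on $E$ is algebraic.

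Next I would check that $\Psi$ is constant on the $\Aut_A(E)$-orbits of the diagonal action $h\cdot\bigl((g,t_a),v\bigr)=\bigl((g\circ h^{-1},t_a),h(v)\bigr)$ used in Definition~\ref{defi:algo}: indeed $\Psi\bigl((g\circ h^{-1},t_a),h(v)\bigr)=g\bigl(h^{-1}(h(v))\bigr)=g(v)$, because $h\in\Aut_A(E)$ fixes the base and hence sends $E_0$ to $E_0$. Therefore $\Psi$ descends to a morphism
\[
\overline{\Psi}:\Authb(E)*_{\Aut_A(E)}E_0\longrightarrow E,\qquad \overline{\Psi}\bigl([(g,t_a),v]\bigr)=g(v),
\]
which by construction covers the identity on $A$ (the projection $[(g,t_a),v]\mapsto[g]=a$ matches $\rho$). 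Restricted to a fiber over $a$, $\overline{\Psi}$ is the linear map $v\mapsto g(v)$ for any chosen $(g,t_a)$, which is a linear isomorphism $E_0\xrightarrow{\sim}E_a$; so $\overline{\Psi}$ is a fiberwise-linear morphism of vector bundles over $A$ that is bijective on each fiber.

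To conclude $E\cong_{vb}\Authb(E)*_{\Aut_A(E)}E_0$ I would then invoke that a morphism of vector bundles over $A$ which is a linear isomorphism on every fiber is an isomorphism of vector bundles; surjectivity on fibers follows from the homogeneity of $E$ (for each $a$ there exists $(g,t_a)\in\Authb(E)$, giving $E_a=g(E_0)$), and injectivity on fibers is immediate since $g$ is an isomorphism. Finally, the second isomorphism $\Authb(E)*_{\Aut_A(E)}E_0\cong_{vb}\operatorname{Z}^0_{hb}(E)*_{\operatorname{Z}^0_A(E)}E_0$ follows exactly as in Proposition~\ref{lemma:hombundles1}: the inclusion $\operatorname{Z}^0_{hb}(E)\hookrightarrow\Authb(E)$ induces a morphism of the two induced spaces over $A$ which is bijective because $\Authb(E)=\operatorname{Z}^0_{hb}(E)\cdot\Aut_A(E)$ (the decomposition $M=Z^0M_{\aff}$ from Remark~\ref{remark:strucmonoid} applied to $M=\Endhb(E)$, restricted to unit groups), hence an isomorphism of vector bundles. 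The main obstacle I anticipate is purely technical: justifying carefully that $\overline{\Psi}$ is a morphism of \emph{schemes} (not just a bijection of points) and that fiberwise-linear-plus-fiberwise-bijective implies a vector bundle isomorphism; both are standard but need the hypothesis that $\Authb(E)\to A$ is locally trivial (a principal bundle), which one has from the structure theory already cited, so the argument goes through without new input.
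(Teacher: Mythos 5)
Your proposal is correct and follows essentially the same route as the paper: define $\Authb(E)\times E_0\to E$ by $\bigl((g,t_a),v\bigr)\mapsto g(v)$, check invariance under the diagonal $\Aut_A(E)$-action so it descends to the induced space, and verify it is a fiberwise linear isomorphism using that $\Authb(E)\to A$ is surjective. The paper obtains the second isomorphism by repeating the argument with $\operatorname{Z}^0_{hb}(E)$ (which also surjects onto $A$), whereas you deduce it from the inclusion $\operatorname{Z}^0_{hb}(E)\hookrightarrow\Authb(E)$ as in Proposition~\ref{lemma:hombundles1}; both are valid and equally short.
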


\begin{proof}
Recall that $A\cong \Authb(E)/\Aut_A(E)$. Define
$$\phi:\Authb (E)\times  E_0\to E$$ as
$\bigl((f,t_a),v\bigr)\mapsto f(v)\in E_a$, where $(f,t_a)\in
\Authb (E)$ and $v\in E_0$.   Clearly,   $\phi$ is constant on the
$\Aut_A(E)$-orbits, and hence induces a  homomorphism
$\varphi:\Authb (E) *_{\Aut_A(E)}E_0\to E$. In fact $\phi$ is an isomorphism of
vector bundles. Indeed, given $a\in A$, consider  $(f,t_a)\in
\Authb(E)$. Then
\[
\bigl(\Authb (E) *_{\Aut_A(E)}E_0\bigr)_a=\bigl\{ \bigl(
(f,t_a),v\bigr)\mathrel{:} v\in E_0\bigr\},
\]
and $f|_{E_0}:E_0\to E_a$ is a linear isomorphism.  It follows that the
restriction $\varphi_a: \bigl(\Authb (E)
*_{\Aut_A(E)}E_0\bigr)_a\to E_A$, $\varphi_a\bigl((f,t_a),
v\bigr)=f(v)$, is a linear isomorphism and $\varphi$ is an
isomorphism of vector bundles as claimed.

Since $\operatorname{Z}^0_{hb}(E)\to A$ is surjective, it is clear
that we can apply the same argument to prove that $E\cong
\operatorname{Z}^0_{hb}(E) *_{\operatorname{Z}^0_A(E)}E_0$.
\smartqed\qed\end{proof}

Theorem \ref{thm:structure} allows us to provide some insight into the structure of
homogeneous vector bundles.

\begin{corollary}
\label{coro:indesc} Let $E\to A$ be an indecomposable  homogeneous
vector bundle. Then:
\begin{itemize}
\item[(i)]\   $E_0$ is an indecomposable
$\operatorname{Z}^0_A(E)$-module.

\item[(ii)]\    $E_0$ is an indecomposable $\End_A(E)$-module;

\end{itemize}

\end{corollary}

\begin{proof}
We prove (i) by way of contradiction. So assume that
 $E_0\cong
V_1\oplus V_2$ as  $\operatorname{Z}^0_A(E)$-modules. Then, we
have the isomorphisms
\[
\bigl(\operatorname{Z}^0_{hb}(E)*_{\operatorname{Z}^0_A(E)}
V_1\bigr)\oplus
\bigl(\operatorname{Z}^0_{hb}(E)*_{\operatorname{Z}^0_A(E)}
V_2\bigr) \cong_{vb}
\bigl(\operatorname{Z}^0_{hb}(E)*_{\operatorname{Z}^0_A(E)}
(V_1\oplus V_2)\bigr)\cong_{vb} E
\]
 as vector bundles over $A$, where the last isomorphism is given by
 Theorem \ref{thm:structure}. It follows that $E$ is decomposable, a contradiction.
 It is clear that (i) implies (ii).
\smartqed\qed\end{proof}

The converse of Corollary \ref{coro:indesc} is false in general, as the
following example shows.

\begin{example}
Let $E=L\oplus L$ where $L$ is a homogeneous line bundle over $A$.
Then $E$ is a decomposable homogeneous vector bundle. However, $E_0$ is an
indecomposable $\Aut_A(E)$-module, since $\Aut_A(E)\cong
\operatorname{GL}_2(\bk)$.
\end{example}

 Denote by $I_r=\bigoplus_{i=1}^r \mathcal O_X=X\times \bk^r$,
the trivial homogeneous vector
 bundle over $X$ of rank $r$, where $X$ is a complete
homogeneous space. In \cite[Lemma 1.4]{kn:miy}, Miyanishi
 gives a characterization of
$I_r\to X$  in terms of the existence of schematic sections for
certain fibrations.  Theorem \ref{thm:structure} allows us to
characterize $I_r$ over an abelian variety in a simpler way in terms of their
endomorphisms monoid.

 Recall that a schematic
section of a fibration $\pi: \Authb(E)\to A$ is a morphism
$\sigma:A\to \Authb(E)$ such that
$\pi\circ\sigma=\operatorname{Id}_A$.

\begin{corollary}
\label{coro:casotrivial} \label{coro:schematic}
 Let $E\to A$ be a homogeneous vector bundle of rank $r$. Then the
 following assertions are equivalent:
\begin{enumerate}
\item $E\cong_{vb}I_r$.

\item $\Endhb(E)\cong_{am} A\times \operatorname {End}(\bk^n)$.

\item  $\pi: \Authb(E)\to A$ has a schematic section.
\end{enumerate}
\end{corollary}

\begin{proof}
It is clear that the endomorphisms monoid $\Endhb(I_r) $ of $I_r$,
satisfies $\Endhb(I_r)\cong_{am} A\times \operatorname
{End}(\bk^n)$, so (1) implies (2) and (3).

$ (2) \Rightarrow  (1)$ If
$\End_{hb}(E)\cong_{am} A\times \operatorname {End}(\bk^n)$, then
\[
E\cong_{vb}  \bigr(A\times \operatorname{GL}_n(\bk)\bigr)
*_{\operatorname {GL}_n(\bk)} \bk^n\cong_{vb} A\times \bk^n,
\]
since $E\cong_{vb} \Authb(E)*_{\Aut_A(E)} E_0$.

$ (3) \Rightarrow  (1)$. Let now $\sigma :A\to
\Authb(E)$, $\sigma(a)=\bigl(\sigma_1(a),t_a\bigr)$, be a
schematic section, and let  $\varphi:A\times E_0\to
\Authb(E)*_{\Aut_A(E)}E_0\cong_{vb}E$ be the morphism  given  by
$\varphi (a,v)= \bigl[(\sigma_1(a),t_a), v\bigr]$. Clearly,
$\varphi$ is a homomorphism of homogeneous vector bundles. It is
enough to prove that $\varphi$ is injective, since
both vector bundles have the same rank.

Suppose that $(a,v),(a',v')\in A\times E_0$ are such that
$\varphi(a,v)=\varphi(a,v')$.  Then
\[
 \bigl[(\sigma_1(a),t_a),
v\bigr]=  \bigl[(\sigma_1(a'),t_{a'}), v'\bigr],
\]
and it follows that $a=a'$. Therefore $v=v'$ and hence $\varphi$  is an isomorphism.
 \smartqed\qed\end{proof}

\begin{corollary}
\label{coro:incompleto} Let $E,E'$ be two homogeneous vector
bundle over $A$. Then the following statements are equivalent:

\noindent {\rm (i)} $E\cong_{vb} E'$;

\noindent {\rm (ii)}  $\Authb (E)\cong_{am} \Authb(E')$, and
$E_0\cong E'_0$ as rational $ \Aut_A(E)$-modules;

\noindent {\rm (iii)}  $\Authb (E)\cong_{am} \Authb(E')$, and
$E_0\cong E'_0$ as rational $
\operatorname{Z}^0_A(E)$-modules.
\end{corollary}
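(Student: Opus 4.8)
The plan is to prove Corollary \ref{coro:incompleto} by establishing $(\mathrm{i})\Rightarrow(\mathrm{ii})\Rightarrow(\mathrm{iii})\Rightarrow(\mathrm{i})$, using Theorem \ref{thm:structure} as the main tool. The implication $(\mathrm{i})\Rightarrow(\mathrm{ii})$ is immediate: an isomorphism $E\cong_{vb}E'$ of homogeneous vector bundles induces a group isomorphism $\Authb(E)\cong_{am}\Authb(E')$ by conjugation, which restricts to an isomorphism $\Aut_A(E)\cong\Aut_A(E')$ on the fibers over $0\in A$ (since the Albanese morphisms are compatible), and restricting the bundle isomorphism to the fiber $E_0$ gives a linear isomorphism $E_0\cong E'_0$ that is equivariant for the identification $\Aut_A(E)\cong\Aut_A(E')$. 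The implication $(\mathrm{ii})\Rightarrow(\mathrm{iii})$ follows by simply restricting the group isomorphism $\Authb(E)\cong_{am}\Authb(E')$ to connected centers: it carries $\operatorname{Z}^0_{hb}(E)$ to $\operatorname{Z}^0_{hb}(E')$ and hence $\operatorname{Z}^0_A(E)=\operatorname{Z}^0_{hb}(E)\cap\End_A(E)$ to $\operatorname{Z}^0_A(E')$, and an isomorphism of $\Aut_A$-modules is in particular an isomorphism of $\operatorname{Z}^0_A$-modules.

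The substantive direction is $(\mathrm{iii})\Rightarrow(\mathrm{i})$. Here I would invoke Theorem \ref{thm:structure}, which gives
\[
E\cong_{vb}\operatorname{Z}^0_{hb}(E)*_{\operatorname{Z}^0_A(E)}E_0,\qquad
E'\cong_{vb}\operatorname{Z}^0_{hb}(E')*_{\operatorname{Z}^0_A(E')}E'_0.
\]
Given the isomorphism of algebraic monoids $\Phi:\Authb(E)\xrightarrow{\sim}\Authb(E')$, it restricts to an isomorphism $\Phi_0:\operatorname{Z}^0_{hb}(E)\xrightarrow{\sim}\operatorname{Z}^0_{hb}(E')$ taking $\operatorname{Z}^0_A(E)$ onto $\operatorname{Z}^0_A(E')$, and by hypothesis there is a linear isomorphism $\psi:E_0\xrightarrow{\sim}E'_0$ intertwining the $\operatorname{Z}^0_A(E)$-action with the $\operatorname{Z}^0_A(E')$-action via $\Phi_0$. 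The pair $(\Phi_0,\psi)$ then descends to a morphism of the induced spaces: the map $\operatorname{Z}^0_{hb}(E)\times E_0\to\operatorname{Z}^0_{hb}(E')*_{\operatorname{Z}^0_A(E')}E'_0$ sending $(g,v)\mapsto[\Phi_0(g),\psi(v)]$ is constant on $\operatorname{Z}^0_A(E)$-orbits precisely because of the intertwining property, hence factors through $\operatorname{Z}^0_{hb}(E)*_{\operatorname{Z}^0_A(E)}E_0$. One checks this induced map is a morphism of vector bundles over $A$ (it covers the identity, using $\Phi_0$ compatible with the Albanese projections to $A\cong\operatorname{Z}^0_{hb}(E)/\operatorname{Z}^0_A(E)$) and is bijective, hence an isomorphism of vector bundles; composing with the isomorphisms from Theorem \ref{thm:structure} yields $E\cong_{vb}E'$.

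The main obstacle, and the point requiring care, is verifying that an abstract isomorphism of algebraic monoids $\Authb(E)\cong_{am}\Authb(E')$ is automatically compatible with the Albanese morphisms to $A$, so that the induced isomorphism on connected centers covers the identity on the base $A$ and the descended map is genuinely a bundle morphism over $A$ (not merely over some automorphism of $A$). This follows because the Albanese morphism is functorial and the kernel $\Aut_A(E)$ (resp.\ $\Aut_A(E')$) is intrinsically characterized inside the monoid (e.g.\ as $\pi^{-1}(0)\cap\Aut(E)$, or as the affine part), so any monoid isomorphism respects the exact sequence $1\to\Aut_A(E)\to\Authb(E)\to A\to 0$; one then must observe that the induced automorphism of $A$ can be normalized to the identity, or absorb it by replacing $E'$ with a translate (which is isomorphic since $E'$ is homogeneous). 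A secondary point is checking bijectivity of the descended map on fibers: over $a\in A$ the fiber of $\operatorname{Z}^0_{hb}(E)*_{\operatorname{Z}^0_A(E)}E_0$ is identified with $E_0$ via any lift $(g,t_a)$, and the map acts there essentially as $\psi$, which is a linear isomorphism — so bijectivity is clear once the base-compatibility is settled.
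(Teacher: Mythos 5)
Your proposal is correct and follows essentially the same route as the paper: the nontrivial direction (iii) $\Rightarrow$ (i) is proved by descending the pair (group isomorphism, intertwining linear isomorphism $E_0\to E'_0$) through the induced-space presentation $E\cong \operatorname{Z}^0_{hb}(E)*_{\operatorname{Z}^0_A(E)}E_0$ of Theorem \ref{thm:structure}. You are in fact somewhat more careful than the paper, which simply writes down $\varphi(g,v)=\psi(g)\bigl(\Phi(v)\bigr)$ and asserts it induces the isomorphism, without addressing the compatibility with the projections to $A$ that you rightly flag.
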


\begin{proof}
The implications (i) $\Longrightarrow$ (ii) $\Longrightarrow$
(iii) are clear.

Assume that (iii) holds. Let $\psi: \Authb(E)\to \Authb(E')$ be an
isomorphism of algebraic groups and let $\Phi :E_0\to E'_0$ be a
morphism of $\operatorname{Z}^0_A(E)$-modules. Then the morphism $\varphi:
\operatorname{Z}^0(E)\times E_0\to E'$, defined as
$\varphi(g,v)=\psi(g)\bigl(\Phi (v)\bigr)$ induces the required
isomorphism $E\to E'$.
\smartqed\qed\end{proof}

\begin{remark} It is well known that there exist homogeneous vector
  bundles $E, E'$ such that $E\not\cong E'$ whereas
  $\Aut_A(E)\cong_{am} \Aut _A(E')$.  Even the stronger condition $\End _A(E)\cong_{am} \End
  _A(E')$ is not sufficient in order to guarantee that $E\cong E'$.
However,  Corollary \ref{coro:casotrivial} shows that the
trivial bundle is characterized by its endomorphisms monoid. One
can see that $\operatorname{Z}^0_A(I_r)=\bk^*\operatorname{Id}$
acts by homotheties in the fiber. In general, the group
$\operatorname{Z}_A^0(E)$ could be larger and there  could exist
two different irreducible representations of the same dimension.
This is the main problem we encounter when trying to generalize
 Corollary
\ref{coro:casotrivial}. Thus, it raises the following question.
\end{remark}

\begin{question}
\label{question} Let $E,E'\to A$ be two indecomposable homogeneous
vector bundles over $A$. Does the existence of an isomorphism
  $\Authb(E)\cong_{am} \Authb(E')$ (or
$\Endhb(E)\cong_{am} \Endhb(E')$) imply that $E\cong E'$?
\end{question}

The following lemma is a straightforward generalization of
\cite[Lemma 4.3]{kn:KKLV} and may be proved in much the same way
(see also \cite[Theorem 2]{kn:oam}).

\begin{lemma}
\label{lem:linemon} Let $\rho:L\to A$ be a homogeneous line
bundle. Then there exists a structure of a commutative algebraic
monoid on $L$ such that $\rho$ is a morphism of algebraic monoids.
The fiber $\rho^{-1}(0)= L_0\cong \bk$ is central in $L$.
Moreover, the unit group is $G(L)=L\setminus \Theta(L)$, where
$\Theta(L)$ is the image of the zero section of $L$.
\end{lemma}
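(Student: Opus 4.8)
The plan is to build the monoid structure on $L$ by transport of structure from the endomorphisms monoid $\Endhb(L)$, using the identification of the latter as a homogeneous vector bundle over $A$ with one-dimensional fibre. Concretely, by Theorem \ref{thm:enhdvb} the projection $\pi:\Endhb(L)\to A$ is a homogeneous line bundle, and since $\End_A(L)=H^0(A,L^*\otimes L)=H^0(A,\cO_A)=\bk$, each fibre is one-dimensional with $\pi^{-1}(0)=\End_A(L)=\bk\cdot 1_L$. The first step is to produce an isomorphism of line bundles $j:L\xrightarrow{\ \sim\ }\Endhb(L)$ over $A$: the zero sections match, and away from the zero section both are principal bundles under the relevant one-dimensional group, so one checks that multiplication by the constant $1\in\bk=\End_A(L)$ induces a well-defined, fibrewise-linear bijection $L_a\to\pi^{-1}(a)$ for each $a\in A$; explicitly $v_a\in L_a$ is sent to the unique bundle homomorphism $(f,t_a)$ with $f(w_0)=\lambda v_a$ where $\lambda\in\bk$ is determined by writing $w_0\in L_0$ in terms of a chosen trivialization — this is exactly the computation carried out in the cited \cite[Lem.~4.3]{kn:KKLV}. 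Then one transports the monoid product of $\Endhb(L)$ (composition of homomorphisms) through $j$ to get $m:L\times L\to L$.

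**Second**, one verifies the asserted properties. Associativity and the neutral element are inherited from $\Endhb(L)$, where $1_L\in\End_A(L)=L_0$ is the identity; so the neutral element of $L$ lies in $L_0\cong\bk$. That $\pi$ (equivalently $\rho$, via $j$) is a morphism of algebraic monoids is immediate since $\pi:\Endhb(L)\to A$ is a morphism of monoids by Remark \ref{thm:endasmonoid}, and $j$ is a morphism over $A$. For commutativity: $\End_A(L)=\bk$ is commutative, and by Remark \ref{remark:strucmonoid} applied to the algebraic monoid $\Endhb(L)$ one has $\Endhb(L)=Z^0\cdot\End_A(L)$ with $\End_A(L)$ central — more directly, $\Endhb(L)$ is a homogeneous line bundle and any two homomorphisms over $A$ commute because their "scalars" multiply commutatively; hence $L$ is commutative and $L_0$ is central. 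Finally, the unit group $G(L)$: under $j$ it corresponds to $\Aut_{hb}(L)$, which by the exact sequence $1\to\Aut_A(L)\to\Aut_{hb}(L)\to A\to 0$ with $\Aut_A(L)=\bk^*$ consists exactly of those $(f,t_a)$ with $f$ an isomorphism, i.e. those fibre elements that are nonzero — so $G(L)=L\setminus\Theta(L)$, where $\Theta(L)$ is the zero section (the image of $\Ker\bigl(\Endhb(L)\bigr)$ under $j^{-1}$, which is the zero section by item (3) of the introduction).

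**Alternatively**, and this is essentially the route \cite{kn:KKLV} takes, one can construct $m$ directly without the detour through $\Endhb(L)$: choose an open cover $\{U_i\}$ of $A$ trivializing both $L$ and the group scheme, with transition functions $g_{ij}\in\cO^*(U_i\cap U_j)$; the key point (using that $L\in\Pic^0(A)$) is that the product structure on each $L|_{U_i}\cong U_i\times\bk$ given by $(a,s)\cdot(b,t)=(a+b,st)$ glues, because the cocycle condition for a multiplicative-type structure is exactly compatible with $L$ being algebraically equivalent to zero. I would still present the proof via $\Endhb(L)$ since the paper already has that machinery in hand and it makes the statements about $\Ker$ and $G(L)$ transparent.

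**The main obstacle** is checking that the map $j:L\to\Endhb(L)$ is well-defined globally and not merely fibrewise — i.e., that the fibrewise identification is algebraic (a morphism of varieties), which amounts to knowing that $\Endhb(L)$ is a \emph{line} bundle with the zero section as its kernel and that the trivialization of $\End_A(L)$ spreads out. This is precisely what Theorem \ref{thm:enhdvb} and the structural results of \cite{kn:ritbr} provide, so the obstacle is handled by invoking those; the remaining verifications (associativity, centrality, identification of the unit group) are then routine diagram-chases in the fibres. I would flag at the outset that we suppress these fibrewise computations, referring to \cite[Lem.~4.3]{kn:KKLV} and \cite[Thm.~2]{kn:oam} for the details of the gluing.
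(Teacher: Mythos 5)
Your proof is essentially correct, but it takes a genuinely different route from the paper: the paper \emph{omits} the proof of Lemma \ref{lem:linemon} altogether, stating that it is a straightforward adaptation of \cite[Lem.~4.3]{kn:KKLV} (see also \cite[Thm.~2]{kn:oam}), i.e.\ of the direct construction in which $L\setminus\Theta(L)$ is exhibited as the extension of $A$ by $\bG_m$ classified by $L\in\Pic^0(A)\cong\operatorname{Ext}^1(A,\bG_m)$, the product then being extended over the zero section --- this is your ``alternative'' third paragraph. Your main route instead transports the composition product from $\Endhb(L)$ through an isomorphism $j:L\cong\Endhb(L)$ built by evaluation at a fixed basis vector of $L_0$; this is legitimate and non-circular (the isomorphism uses only Theorem \ref{thm:enhdvb} and the one-dimensionality of $\Hom_A(L,t_a^*L)=H^0(A,\cO_A)$, not the monoid structure on $L$), but note that it reverses the paper's logical order: Corollary \ref{coro:endofLisL} is \emph{deduced from} this Lemma in the text, whereas in your scheme it becomes the first step, so its separate proof would have to be replaced by ``immediate from the construction.'' What your route buys is that the monoid axioms, the identification of $G(L)$ with $\Authb(L)=L\setminus\Theta(L)$, and the identification of the Kernel with the zero section all come for free from the Brion--Rittatore structure theory already quoted; what the KKLV route buys is independence from that machinery and an explicit cocycle description. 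One point to tighten: the parenthetical justification of commutativity (``any two homomorphisms over $A$ commute because their scalars multiply commutatively'') is not an argument --- a central extension of $A$ by $\bk^*$ can be non-commutative (theta groups of ample line bundles), and commutativity here genuinely uses $L\in\Pic^0(A)$. Keep instead your appeal to Remark \ref{remark:strucmonoid}: $\Endhb(L)=\operatorname{Z}^0\cdot\End_A(L)$ with $\operatorname{Z}^0$ central in the dense unit group (hence in the monoid) and $\End_A(L)=\bk\cdot 1_L$ consisting of central scalars, which does the job.
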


\begin{corollary}
\label{coro:endofLisL} If $L\to A$ is a homogeneous line bundle,
then $\Endhb(L)\cong_{vb} L$.
\end{corollary}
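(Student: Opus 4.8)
The plan is to deduce the corollary from Theorem~\ref{thm:structure} together with Lemma~\ref{lem:linemon}. By Theorem~\ref{thm:structure} applied to the homogeneous line bundle $L$, we have $L\cong_{vb}\Authb(L)*_{\Aut_A(L)}L_0$, so it suffices to identify $\Endhb(L)$ with the same induced space. The key observation is that a line bundle is \emph{simple}: $\End_A(L)=H^0(A,L^*\otimes L)=H^0(A,\cO_A)=\bk$. Hence $\End_A(L)=\bk\cdot 1_L=\Aut_A(L)\cup\{0\}$, and in particular $\Aut_A(L)\cong\bk^*=G_m$ acting on the one-dimensional fiber $L_0\cong\bk$ by scalars.

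First I would invoke Theorem~\ref{thm:enhdvb} (or directly Remark~\ref{rem:endwithcenter}), which gives $\Endhb(L)\cong_{am}\Authb(L)*_{\Aut_A(L)}\End_A(L)$ as algebraic monoids, hence in particular $\Endhb(L)\cong_{vb}\Authb(L)*_{\Aut_A(L)}\End_A(L)$ as vector bundles over $A$. Next I would compare this with $L\cong_{vb}\Authb(L)*_{\Aut_A(L)}L_0$ from Theorem~\ref{thm:structure}. Both are induced spaces with the same structure group $\Authb(L)$ and the same subgroup $\Aut_A(L)\cong G_m$; the only thing to check is that the $\Aut_A(L)$-modules $\End_A(L)$ and $L_0$ are isomorphic. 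But $\End_A(L)=\bk$ with $G_m$ acting by left multiplication $c\cdot h=ch$, which is precisely the standard one-dimensional scalar action, and $L_0\cong\bk$ carries exactly this same $G_m=\Aut_A(L)$-action since automorphisms of $L$ fixing the base act on the fiber $L_0$ by the corresponding scalar. Therefore the two induced bundles are canonically isomorphic, giving $\Endhb(L)\cong_{vb}\Authb(L)*_{\Aut_A(L)}L_0\cong_{vb}L$.

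I expect the only subtlety to be making the identification of $\Aut_A(L)$-modules $\End_A(L)\cong L_0$ fully precise: one must observe that the evaluation map $e_0:\End_A(L)\to\End_\bk(L_0)$, $e_0(f)=f|_{L_0}$, sends $1_L$ to $\Id_{L_0}$ and is $\Aut_A(L)$-equivariant, and that $\End_\bk(L_0)\cong\bk$ acts on $L_0$ tautologically; composing, $\End_A(L)\xrightarrow{\sim}\End_\bk(L_0)\xrightarrow{\sim}L_0$ (the last map being $\phi\mapsto\phi(v_0)$ for a fixed nonzero $v_0\in L_0$) is the desired isomorphism of $\Aut_A(L)$-modules, hence of $\Authb(L)$-linearized sheaves after inducing up. Alternatively, and perhaps more cleanly, one could simply note that both $\Endhb(L)$ and $L$ are homogeneous line bundles over $A$ (the former by Theorem~\ref{thm:enhdvb}, having one-dimensional fiber $\End_A(L)=\bk$), that the zero section of each corresponds under $\pi$ to the kernel, and that the quotient $\Endhb(L)\setminus\Theta(\Endhb(L))\cong\Authb(L)$ as a $G_m$-bundle over $A$ coincides with $L\setminus\Theta(L)$; this is essentially the content of Lemma~\ref{lem:linemon}, which already endows $L$ with the monoid structure whose unit group is $G(L)=L\setminus\Theta(L)$. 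Either route gives $\Endhb(L)\cong_{vb}L$, completing the proof.
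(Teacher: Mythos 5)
Your proof is correct, but it takes a different route from the paper's. The paper's own argument is shorter and more concrete: by Lemma~\ref{lem:linemon} the line bundle $L$ carries a structure of algebraic monoid, so for each $x\in L$ left multiplication $l_x:L\to L$, $l_x(y)=xy$, is an endomorphism; this embeds $L$ as a sub-bundle of $\Endhb(L)$, and since $\End_A(L)\cong\bk$ the bundle $\Endhb(L)$ has rank one, forcing the inclusion to be an equality. You instead realize both sides as induced spaces -- $L\cong_{vb}\Authb(L)*_{\Aut_A(L)}L_0$ from Theorem~\ref{thm:structure} and $\Endhb(L)\cong_{vb}\Authb(L)*_{\Aut_A(L)}\End_A(L)$ from Theorem~\ref{thm:enhdvb} -- and then match the fibers $L_0\cong\End_A(L)\cong\bk$ as one-dimensional $\Aut_A(L)\cong G_m$-modules with the scalar action. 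Both arguments are sound; yours requires no monoid structure on $L$ at all (only the simplicity $\End_A(L)=H^0(A,\cO_A)=\bk$), and it is exactly the template the paper itself later deploys for higher rank (the rank~$2$ and rank~$3$ propositions in Section~\ref{sec:exam}, where $E\cong_{vb}\Endhb(E)$ is deduced from an isomorphism of $\Aut_A(E)$-modules $E_0\cong\End_A(E)$). What the paper's route buys in exchange is that the isomorphism it produces, $x\mapsto l_x$, is visibly compatible with composition, which is the form in which $\Endhb(L)\cong L$ is used later (e.g.\ in Theorem~\ref{thm:sumadirecta} and Example~\ref{exa:autL}); with your construction that compatibility would need a separate, if easy, verification.
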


\begin{proof}
By Lemma \ref{lem:linemon} $L$ is an algebraic monoid. For any
$x\in L$ let $l_x :L\to L$ be the endomorphisms defined as
$l_x(y)=xy$ (the product on the algebraic monoid $L$). Hence, $L$
is a sub-bundle of $\Endhb(L)$. But $\Enda(L)\cong \bk$; hence
$\Endhb(L)$ is a line bundle, and $L=\Endhb(L)$.
\smartqed\qed\end{proof}

\subsection{The  vector bundle structure of $\End_{hb}(E)$}

For $i=1,2$ let $E_i$ be a homogeneous vector bundle over $A $. In
order to study the structure of $\Hom_{hb}(E_1,E_2)$ as a vector
bundle it suffices to assume that $E_i$ is indecomposable (see
Remark \ref{lem:endbundlesim1}). In this case, we have that
$E_i\cong L_i\otimes F_i$, where $L_i\in \operatorname{Pic}^0(A)$
and $F_i$ is a unipotent homogeneous vector bundle.

\begin{proposition}
\label{prop:lotimesf}  Let $E_i\cong L_i\otimes F_i$ be
indecomposable homogeneous vector bundles of rank
$r_i=\operatorname{rk}(E_i)$, for $i=1,2$.

\noindent {\rm (1)} If $L_1\not\cong_{hb}L_2$, then
 \[
\Hom_{hb}(E_1,E_2) = \{ \theta_a:E_1\to E_2 \mathrel{:} a\in A\}
\cong_{vb} A\times\{0\},
\]
where if $v\in (E_1)_x$, then $\theta_a(v)= 0_{a+x}\in E_2$.

\noindent {\rm(2)} If $L_1\cong_{vb}L_2$, then
\[
\Hom_{hb}(E_1,E_2)\cong_{vb} L_1\otimes \Hom_{hb}(F_1,F_2).
\]
\medskip

Moreover, $\Endhb(E_1)\cong_{vb} L_1\otimes \Endhb(F_1)$.
\end{proposition}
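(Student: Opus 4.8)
\textbf{Proof proposal for Proposition \ref{prop:lotimesf}.}

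The plan is to reduce everything to a statement about $\Hom_A(E_1,E_2)$ together with the already-established structure theorem $\Hom_{hb}(E_1,E_2)\cong_{vb}\Authb(E_2)*_{\Aut_A(E_2)}\Hom_A(E_1,E_2)$ from Theorem \ref{thm:endmon01}. For part (1), I would use the Fourier--Mukai/Mukai dictionary recalled after Remark \ref{thm:miyaandmukai}: an indecomposable homogeneous bundle $E_i\cong L_i\otimes F_i$ corresponds to a coherent sheaf on $\widehat A$ supported at the single point $\widehat x_i\in\widehat A$ determined by $L_i$. If $L_1\not\cong_{hb}L_2$ these supports are disjoint points, so $\Hom_A(E_1,E_2)=0$. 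Hence the fibre $\pi^{-1}(0)$ of $\Hom_{hb}(E_1,E_2)\to A$ is zero, and by homogeneity every fibre is zero; the only bundle homomorphisms $E_1\to E_2$ are the ``zero over a translation'' maps $\theta_a$, and the induced-space description identifies $\Hom_{hb}(E_1,E_2)$ with $\Authb(E_2)*_{\Aut_A(E_2)}\{0\}\cong_{vb}A\times\{0\}$. This gives (1) with essentially no computation.

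For part (2), assume $L_1\cong_{vb}L_2\cong L$. The key observation is that tensoring by $L$ is an equivalence of categories $\mathcal U_A\to\mathcal H_{L,A}$ (the remark just before Gieseker stability), so on the level of sheaves-that-fix-the-base we get a canonical isomorphism $\Hom_A(E_1,E_2)=\Hom_A(L\otimes F_1,L\otimes F_2)\cong\Hom_A(F_1,F_2)$. Moreover this equivalence is compatible with automorphism groups: $\Authb(L\otimes F)\cong\Authb(F)$ acting through the $L$-twist, and likewise $\Aut_A(E_i)\cong\Aut_A(F_i)$; more precisely, the construction $f\mapsto\id_L\otimes f$ on the base-fixing part extends to bundle homomorphisms over translations, intertwining the two induced-space presentations. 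Plugging this into Theorem \ref{thm:endmon01} gives
\[
\Hom_{hb}(E_1,E_2)\cong_{vb}\Authb(F_2)*_{\Aut_A(F_2)}\Hom_A(F_1,F_2),
\]
and I must then recognize the right-hand side, after the $L$-twist is reinstated fibrewise, as $L\otimes\Hom_{hb}(F_1,F_2)$. The cleanest way is to build the isomorphism directly: define a map $L\otimes\Hom_{hb}(F_1,F_2)\to\Hom_{hb}(E_1,E_2)$ fibrewise by $\ell\otimes(g,t_a)\mapsto\big((\ell\cdot-)\otimes g,\,t_a\big)$ using the monoid structure on $L$ from Lemma \ref{lem:linemon} (so that multiplication by $\ell\in L$ is an endomorphism of $L$ covering a translation), check it is a well-defined homomorphism of vector bundles covering $\id_A$, linear on fibres, and a fibrewise isomorphism because both fibres are canonically $\End_A(L)\otimes\Hom_A(F_1,F_2)\cong\Hom_A(F_1,F_2)$; then conclude $\cong_{vb}$. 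The last sentence of the proposition, $\Endhb(E)\cong_{vb}L\otimes\Endhb(F)$, is just the case $E_1=E_2=E$, $F_1=F_2=F$.

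The main obstacle I anticipate is the bookkeeping in (2): making precise that the categorical equivalence $F\mapsto L\otimes F$ lifts from $\Hom_A$ to $\Hom_{hb}$ in a way that is simultaneously compatible with the $\Aut_A$-action used to form the induced space \emph{and} with the monoid structure on $L$ supplied by Lemma \ref{lem:linemon}. One has to be careful that ``$\ell\cdot-$'' is an honest morphism of the line bundle $L$ over the translation $t_a$ where $a=\pi(\ell)$, so that composing with $g$ over a translation $t_b$ yields a bundle homomorphism $E_1\to E_2$ over $t_{a+b}$, and that this assignment is constant on the relevant orbits so it descends to the induced space. Once this compatibility is set up correctly, injectivity plus equality of ranks (via $\Hom_A(E_1,E_2)\cong\Hom_A(F_1,F_2)$, using Theorem \ref{thm:enhdvb} for finite-dimensionality of the fibres) forces the map to be an isomorphism, exactly as in the proof of Proposition \ref{prop:endbundlesim1}.
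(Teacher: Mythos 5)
Your overall strategy matches the paper's for part (2) and gives a valid alternative for part (1), but one intermediate claim in (2) is false and should be removed. For part (1) the paper does not invoke the Fourier--Mukai dictionary: it takes the filtrations of $E_1$ and $E_2$ with successive quotients $L_1$ and $L_2$, and shows that any nonzero $\varphi\in\Hom_A(E_1,E_2)$ induces a nonzero morphism $L_1\cong H_{i+1}/H_i\to K_{j+1}/K_j\cong L_2$ of line bundles algebraically equivalent to zero, hence an isomorphism $L_1\cong L_2$. Your appeal to disjoint supports on $\widehat{A}$ reaches the same conclusion $\Hom_A(E_1,E_2)=0$ and is legitimate given the equivalence recalled in Section 2; the paper's route is merely more elementary and self-contained. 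For part (2), your ``direct construction'' $\ell\otimes(g,t_a)\mapsto\bigl((\ell\cdot{-})\otimes g,t_a\bigr)$ is exactly the paper's map (written there as $(l,t_a)\otimes(h,t_a)\mapsto(l\otimes h,t_a)$ under the identification $L\cong\Endhb(L)$ of Corollary \ref{coro:endofLisL}), and the conclusion by injectivity plus equality of ranks, with the ranks computed from $\Hom_A(E_1,E_2)\cong H^0(A,F_1^\vee\otimes F_2)\cong\Hom_A(F_1,F_2)$, is also the paper's.

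The step you should delete is the claim that $\Authb(L\otimes F)\cong\Authb(F)$ ``acting through the $L$-twist'' and the resulting display $\Hom_{hb}(E_1,E_2)\cong_{vb}\Authb(F_2)*_{\Aut_A(F_2)}\Hom_A(F_1,F_2)$. The first assertion is false in general: for $F=\cO_A$ and $L$ nontrivial, $\Authb(L)=L\setminus\Theta(L)$ is a nontrivial extension of $A$ by $\bk^*$ (its class is $L$ itself), whereas $\Authb(\cO_A)=A\times\bk^*$ is the trivial one. Moreover the displayed isomorphism, combined with Theorem \ref{thm:endmon01} applied to $F_1,F_2$, would say $\Hom_{hb}(E_1,E_2)\cong_{vb}\Hom_{hb}(F_1,F_2)$, contradicting the very statement you are proving (already for $F_1=F_2=\cO_A$, where $\Hom_{hb}(E_1,E_2)\cong L$ while $\Hom_{hb}(\cO_A,\cO_A)\cong\cO_A$). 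Since you ultimately bypass this passage via the direct construction, the proof survives, but as written that step is an error, not mere bookkeeping, and the correct statement is only the fibrewise one $\Hom_A(E_1,E_2)\cong\Hom_A(F_1,F_2)$ that you also record.
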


\begin{proof}
By Proposition \ref{lemma:hombundles1},
\[
\Hom_{hb}(E_1,E_2) \cong
\Authb(E_1)*_{\Aut_A(E_1)}\Hom_A(E_1,E_2).
\]
 We claim that if $L_1\not\cong L_2$,
 then $\Hom_A(E_1,E_2)= 0$, and hence,
\[
\Hom_{hb}(E_1,E_2)\cong \Authb(E_1)*_{\Aut_A(E_1)}\{0\}\cong
A\times \{0\}.
\]

Indeed, suppose that there exists $0\not=\varphi\in \Hom_A(E_1,E_2)$ and let
$$L=H_0\subset H_1\subset H_2\subset \dots \subset H_{r_1-1} \subset
H_{{r_1}}=E_1$$ be the filtration associated with $E_1$. Let $k\in \{0,\dots, r_1-1\}$
be such that $H_k\subset \Ker(\varphi)$ but $ H_{k+1} \not\subset
\Ker(\varphi)$. Let   $j\in \{0,\dots, r_2-1\}$ be such that
$\operatorname{Im}(\varphi)\subset K'_{j+1}$,
$\operatorname{Im}(\varphi)\not\subset K'_{j}$ where
$$0=K_0\subset K_1\subset K_2\subset \dots \subset K_{r_2-1} \subset
K_{{r_2}}=E_2$$ is the filtration associated with $E_2$.

 Then
$\varphi$ induces a non zero morphism $\widetilde{\varphi}:
L_1\cong H_{k+1}/H_k\to K_{j+1}/K_j\cong L_2.$  Since both are
algebraically equivalent to zero, $\widetilde{\varphi}$ is an
isomorphism, and  $L_1\cong L_2$ as claimed.

Suppose now that $L_1\cong_{vb}L_2= L$. Then
\[
\Hom_A(E_1,E_2) \cong (L\otimes F_1)^\vee\otimes (L\otimes F_2)
\cong F_1^\vee\otimes F_2\cong \Hom_A(F_1,F_2).
\]

It follows that $\Hom_{hb}(E_1,E_2)$ and $L\otimes
\Hom_{hb}(F_1,F_2)$ are homogeneous vector bundles of the same
rank. Consider now the homomorphism of vector bundles $\varphi:
L\otimes \Hom_{hb}(F_1,F_2)\to \Hom_{hb}(E_1,E_2)$ given by $
\varphi\bigl((l,t_a)\otimes (h,t_a)\bigr) = (l\otimes h, t_a)$,
where we use the identification $L\cong_{vb} \End_{hb}(L)$, and $(l\otimes
h)( v\otimes w)= l(v)\otimes h(w)$, for $v\otimes  w\in L\otimes
F$. Since $\varphi$ is
 injective, it is an isomorphism of vector bundles, and the proof is completed.
\smartqed\qed\end{proof}

Theorem \ref{thm:endmon1} and Proposition \ref{prop:lotimesf} give
the following explicit description of $\Endhb(E)$.

\begin{theorem}
\label{thm:endmon2} Let $E=\bigoplus_{i,j} L_i\otimes F_{i,j}$ and
$E'=\bigoplus_{i,j} L_i\otimes F'_{i,j}$ be two homogeneous vector
bundles, where $L_i$ are homogeneous line bundles, $F_{ij}$ and
$F'_{ij}$ unipotent homogeneous vector bundles and $L_i\ncong L_j$
if $i\ne j$. Then
\[
\Hom_{hb}(E,E')\cong_{vb}  \bigoplus_i L_i\otimes
\bigl(\oplus_{j,k}\Hom_{hb} (F_{i,j},F'_{i,k}) \bigr).
\]

Moreover, $ \Endhb(E)\cong_{vb}  \bigoplus_i L_i\otimes
\bigl(\oplus_{j,k}\Hom_{hb} (F_{i,j},F_{i,k}) \bigr).$ 
\end{theorem}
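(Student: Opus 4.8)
The plan is to deduce Theorem \ref{thm:endmon2} directly by combining the additivity of $\Hom_{hb}$ over direct sums (Proposition \ref{prop:endbundlesim1}) with the two cases of Proposition \ref{prop:lotimesf}. First I would apply Proposition \ref{prop:endbundlesim1} to the decompositions $E=\bigoplus_{i,j} L_i\otimes F_{i,j}$ and $E'=\bigoplus_{i,k} L_i\otimes F'_{i,k}$, obtaining
\[
\Hom_{hb}(E,E')\cong_{vb}\bigoplus_{i,j}\bigoplus_{i',k}\Hom_{hb}\bigl(L_i\otimes F_{i,j},\,L_{i'}\otimes F'_{i',k}\bigr).
\]
Here one must be a little careful: Proposition \ref{prop:endbundlesim1} is stated using the structure of Theorem \ref{thm:endmon01}, but by Theorem \ref{thm:endmon1} the homogeneous vector bundle structure on each $\Hom_{hb}$ is unique up to isomorphism, so the identification is unambiguous.

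Next I would split the double sum according to whether $i=i'$ or $i\neq i'$, and invoke Proposition \ref{prop:lotimesf}. When $i\neq i'$ we have $L_i\ncong L_{i'}$, so part (1) of that proposition gives $\Hom_{hb}(L_i\otimes F_{i,j},L_{i'}\otimes F'_{i',k})\cong_{vb}A\times\{0\}$, i.e.\ the zero bundle; these summands contribute nothing. When $i=i'$, part (2) with $L=L_i$ gives
\[
\Hom_{hb}\bigl(L_i\otimes F_{i,j},\,L_i\otimes F'_{i,k}\bigr)\cong_{vb} L_i\otimes\Hom_{hb}(F_{i,j},F'_{i,k}).
\]
Summing over $j$ and $k$ and pulling the tensor factor $L_i$ out of the (finite) direct sum — using that $L_i\otimes(-)$ commutes with $\oplus$ — yields
\[
\Hom_{hb}(E,E')\cong_{vb}\bigoplus_i L_i\otimes\Bigl(\bigoplus_{j,k}\Hom_{hb}(F_{i,j},F'_{i,k})\Bigr),
\]
which is the asserted formula. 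Setting $E'=E$ and $F'_{i,k}=F_{i,k}$ gives the statement for $\Endhb(E)$.

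The only genuine subtlety — and the step I would treat most carefully — is bookkeeping about which vector-bundle structure on each $\Hom_{hb}$ is being used. Proposition \ref{prop:endbundlesim1} produces the decomposition using the "post-composition" structure of Theorem \ref{thm:endmon01}, applied to the codomain $E'$; Proposition \ref{prop:lotimesf}, however, is proved via Proposition \ref{lemma:hombundles2}, i.e.\ with the "pre-composition" structure attached to the domain $E_1$. Since Theorem \ref{thm:endmon1} tells us these two structures on $\Hom_{hb}(E_i',E_j')$ (and on each $\Hom_{hb}(L_i\otimes F_{i,j},L_i\otimes F'_{i,k})$) agree up to isomorphism of homogeneous vector bundles, all the isomorphisms $\cong_{vb}$ above are legitimate and compose consistently; I would state this explicitly rather than leave it implicit. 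Everything else is a routine reindexing of a finite direct sum, so no substantial obstacle remains.
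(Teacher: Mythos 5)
Your argument is exactly the one the paper intends: the theorem is stated there with only a pointer to Theorem \ref{thm:endmon1} and Proposition \ref{prop:lotimesf}, and the missing steps are precisely your use of Proposition \ref{prop:endbundlesim1} to split $\Hom_{hb}$ over the direct sums, followed by the two cases of Proposition \ref{prop:lotimesf} to kill the cross terms with $L_i\ncong L_{i'}$ and to factor out $L_i$ from the diagonal ones. Your explicit attention to reconciling the pre- and post-composition bundle structures via Theorem \ref{thm:endmon1} is a welcome addition that the paper leaves implicit.
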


\subsection{The algebraic monoid structure of  $\Endhb(E)$}

Let us start with an important consequence of Corollary
\ref{coro:ker}.

\begin{proposition}
\label{prop:kernelend}  The Kernel of $\Endhb(E)$ of a homogeneous
vector bundle $\rho: E\to A$ is given by
\[
\operatorname{Ker}\bigl(\Endhb(E)\bigr)= \Theta\bigl(\Endhb(E)\bigr)=\bigr\{
\theta_a:E\to E\mathrel{:} \theta_a(v)= 0_{\rho(v)+a}\bigr\},
\]
where $\Theta$ is the zero section. Moreover,
$\Ker\bigl(\Endhb(E)\bigr)$ is an algebraic group and isomorphic
to  $A$. 
\end{proposition}

Recall that an endomorphism $f\in \End_A(E)$ is called \emph{nilpotent
of index $n$} if $f^n(v)\in \Theta(E)$ for all $v\in E$ and there exists $v_0\in E$ such that  $f^{n-1}(v_0)\notin
\Theta(E)$. In other words, $f^n=0\in \End_A(E)$ whereas $f^{n-1}\neq 0$. The set $N_A(E)$ of nilpotent endomorphisms is an ideal of $\End_A(E)$, see Section \ref{label:sect22} and \cite{kn:atiyahconn}.

\begin{definition} Let $E\to A$ be a
homogeneous vector bundle. An endomorphism $f\in \Endhb(E)$
 is called {\em pseudo-nilpotent
of index $n$}\/ if $f^n\in \Theta\bigl(\Endhb(E)\bigr) =
\operatorname{Ker}\bigl(\Endhb(E)\bigr)$ whereas $f^{n-1}\notin
\Theta\bigl(\Endhb(E)\bigr)$.  We denote by $\Nhb (E)$ the set of pseudo-nilpotent
endomorphisms. It is clear that $N_A(E)=\Nhb (E)\cap \End_A(E)$.

\end{definition}

If $L$ is a homogeneous line bundle, then
$\Endhb(L)=L$ and  $ \Authb(L)=L\setminus  \Theta(L)$. Hence,
$\Endhb(L)=\Authb(L)\sqcup \Theta(L)$. In particular,
$\Nhb(L)=\Theta(L)=\operatorname{Ker}(L)$. For indecomposable vector bundles of higher rank we have
 an analogue of  Atiyah's results (see
\cite{kn:atiyahconn}).

\begin{theorem}
\label{thm.autnilp} Let $E\to A$ be an indecomposable homogeneous
vector bundle. Then:

 \noindent (1)  The
algebraic monoid $\Endhb(E)$ decomposes as
\[
\Endhb(E)=\Authb(E)\sqcup \Nhb(E).
\]

Moreover,  $\Nhb (E)$  is an ideal of $\Endhb(E)$.

\noindent (2) The set $\Nhb(E)$ of pseudo-nilpotent elements is a
homogeneous vector bundle over $A$ of
$\operatorname{rk}\Nhb(E)=\operatorname{rk}\Endhb(E)- 1$.
Moreover,
\[
\Nhb (E)=\operatorname{Z}^0_{hb}(E)\cdot N_A(E)\cong
\operatorname{Z}^0_{hb}(E)*_{\operatorname{Z}^0_A(E)}N_A(E).
\]
where $\cdot$ denotes  the product in $\Endhb(E)$. The fiber of
$\pi: \Nhb (E)\to A$ is isomorphic to
$N_A(E)$, and $\pi$ is a morphism of algebraic
semigroups.
\end{theorem}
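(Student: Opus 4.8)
The plan is to exploit the known Chevalley decomposition of $\Endhb(E)$ as an algebraic monoid (Remark \ref{rem:endwithcenter}) together with Atiyah's decomposition $\End_A(E) = \bk\cdot\Id_E \oplus \operatorname{N}_A(E)$ valid for indecomposable $E$, and then transport everything along the induced-space isomorphism $\Endhb(E)\cong_{am}\operatorname{Z}^0_{hb}(E)*_{\operatorname{Z}^0_A(E)}\End_A(E)$. For part (1), I would first observe that an element $(f,t_a)\in\Endhb(E)$ lies in $\Authb(E)$ if and only if its ``affine part'', i.e.\ the corresponding element of the fiber over $0$ after translating by a representative of $\operatorname{Z}^0_{hb}(E)$, lies in the unit group $\Aut_A(E)\subset\End_A(E)$. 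Since $E$ is indecomposable, $\End_A(E)$ is a finite-dimensional local $\bk$-algebra with maximal ideal $\operatorname{N}_A(E)$, so $\End_A(E) = \Aut_A(E)\sqcup\operatorname{N}_A(E)$ as a set (every non-unit is nilpotent). Pushing this dichotomy through the induced-space construction gives $\Endhb(E) = \Authb(E)\sqcup\bigl(\operatorname{Z}^0_{hb}(E)*_{\operatorname{Z}^0_A(E)}\operatorname{N}_A(E)\bigr)$; it then remains to identify this second piece with $\Nhb(E)$ as defined in Definition \ref{def:pseudonilp}. This identification is the crux of part (1): I would argue that $(f,t_a)$ is pseudo-nilpotent (some power lands in $\Theta(E)=\Ker(\Endhb(E))$) precisely when its affine part is nilpotent in $\End_A(E)$, using that composing powers of $(f,t_a)$ eventually forces the $\End_A$-component to be a power of a nilpotent endomorphism, hence $0$, which by Corollary \ref{prop:kernelend} means the element lies in $\Theta(E)$. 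Conversely a unit has all powers units, so is never pseudo-nilpotent. Since $\Theta(E)=\Ker(\Endhb(E))$ is the minimal ideal and $\Authb(E)$ is open, the complement $\Nhb(E)$ is closed and absorbs multiplication on both sides (a product with any element, restricted to fibers over $0$, stays in the ideal $\operatorname{N}_A(E)$), hence is an ideal.

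For part (2), the formula $\Nhb(E) = \operatorname{Z}^0_{hb}(E)\cdot\operatorname{N}_A(E)$ and the induced-space description $\Nhb(E)\cong\operatorname{Z}^0_{hb}(E)*_{\operatorname{Z}^0_A(E)}\operatorname{N}_A(E)$ should follow almost formally: $\operatorname{N}_A(E)$ is a $\operatorname{Z}^0_A(E)$-stable linear subspace of $\End_A(E)$ (it is a two-sided ideal, and the central torus acts on it by restriction of the action on $\End_A(E)$), so the induced space $\operatorname{Z}^0_{hb}(E)*_{\operatorname{Z}^0_A(E)}\operatorname{N}_A(E)$ is a vector sub-bundle of $\Endhb(E)\cong\operatorname{Z}^0_{hb}(E)*_{\operatorname{Z}^0_A(E)}\End_A(E)$, with fiber $\operatorname{N}_A(E)$ over each point and hence rank $\dim_\bk\End_A(E)-1 = \operatorname{rk}\Endhb(E)-1$, using $\End_A(E)=\bk\cdot\Id_E\oplus\operatorname{N}_A(E)$. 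That this sub-bundle coincides with the set $\Nhb(E)$ from part (1) is exactly what was shown there. Homogeneity of $\Nhb(E)$ as a vector bundle follows since $\Authb(E)$ acts transitively on the base $A$ by left multiplication and preserves $\Nhb(E)$ (it is an ideal), exactly as in the proof of Theorem \ref{thm:enhdvb}. Finally, $\pi:\Nhb(E)\to A$ is the restriction of the monoid morphism $\pi:\Endhb(E)\to A$ to the closed sub-semigroup $\Nhb(E)$, so it is a morphism of algebraic semigroups with fiber $\pi^{-1}(0)\cap\Nhb(E)=\operatorname{N}_A(E)$.

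The step I expect to be the main obstacle is the precise verification, in part (1), that ``some power lies in $\Theta(E)$'' is equivalent to ``the affine part is nilpotent''. One must be careful that the $\End_A$-component of a power $(f,t_a)^k$ is not literally the $k$-th power of the $\End_A$-component of $(f,t_a)$ — the translation by $a$ twists the identification of fibers, so one is really composing $k$ translated copies. The clean way around this is to use the isomorphism $\Endhb(E)\cong_{am}\operatorname{Z}^0_{hb}(E)*_{\operatorname{Z}^0_A(E)}\End_A(E)$ and the decomposition $M=Z^0\cdot M_{\aff}$: writing $(f,t_a)=z\cdot h$ with $z\in\operatorname{Z}^0_{hb}(E)$ over $a$ and $h\in\End_A(E)$, and using that $z$ is central and hence $(z\cdot h)^k = z^k\cdot h^k$ genuinely, reduces the question to the nilpotency of $h$ in the finite-dimensional local algebra $\End_A(E)$, together with the identification of $\operatorname{Z}^0_{hb}(E)\cap\Ker(\Endhb(E))$ via Corollary \ref{coro:ker}. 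Once this reduction is in place the rest is bookkeeping with induced spaces.
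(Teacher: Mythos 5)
Your proposal is correct and takes essentially the same route as the paper's proof: both rest on the decomposition $\Endhb(E)=\operatorname{Z}^0_{hb}(E)\cdot\End_A(E)\cong\operatorname{Z}^0_{hb}(E)*_{\operatorname{Z}^0_A(E)}\End_A(E)$, the dichotomy $\End_A(E)=\Aut_A(E)\sqcup\operatorname{N}_A(E)$ coming from Atiyah's $\End_A(E)=\bk\cdot\Id_E\oplus\operatorname{N}_A(E)$, and the centrality identity $(z\cdot f)^n=z^n\cdot f^n=\theta_{\pi(z^n)}$ to identify the non-unit piece with $\Nhb(E)$. The subtlety you flag about powers twisting the fiber identification is resolved in the paper exactly as you propose, by writing elements as $z\cdot h$ with $z$ central.
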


\begin{proof}
From Remark \ref{rem:endwithcenter} we have that $$\Endhb(E)=
\operatorname{Z}_{hb}^0(E)\cdot\End_A(E) \cong
\operatorname{Z}_{hb}^0(E)*_{\operatorname{Z}^0_A(E)}\End_A(E).$$
 Let $\ f\in \End_A(E)$ and $z\in
\operatorname{Z}_{hb}^0(E)$. Since
$\End_A(E)=\bk\operatorname{Id}\oplus N_A(E)$ it
follows that either $f$ is in $\Aut_A(E)$ or in
$N_A(E)$. In the first case $z\cdot f\in \Authb(E)$
and in the second if $f^n=0$, then  $(z\cdot f)^n=z^n\cdot f^n=
\theta_{\pi(z^n)}$, where $\pi:\operatorname{Z}^0_{hb}(E)\to A$ is
the canonical projection. Therefore,
\[
\Endhb(E)=\Authb(E)\sqcup \Nhb(E).
\]

Note that in particular we have proved that
\[
\Nhb (E)=\operatorname{Z}^0_{hb}(E)\cdot N_A(E)\cong
\operatorname{Z}^0_{hb}(E)*_{\operatorname{Z}^0_A(E)} N_A(E).
\]

Since $\Nhb (E)=\Endhb(E)\setminus \Authb(E)$, it follows that
$\Nhb (E)$ is an ideal. In particular,  $\Nhb (E)$ is
$\Authb(E)$-stable, and hence a homogeneous vector bundle.
Finally, the equality
$\operatorname{rk}\Nhb(E)=\operatorname{rk}\Endhb(E)- 1$ follows
again from the fact that $\End_A(E)=\bk\operatorname{Id}\oplus
N_A(E)$ and $N_A(E)\ne 0.$
\smartqed\qed\end{proof}

The next theorem yields information about the
geometric structure of $\Endhb(E)$ when the rank is $\geq 2$.

\begin{theorem}
\label{thm:Endhbext} If $E\to A$ is an indecomposable  vector
bundle of rank $r\geq 2$ obtained by successive extensions of the
homogeneous line bundle $L$, then $\Endhb(E)$ and $\Nhb (E)$ are also
obtained by successive extensions of $L$. Moreover, if $r\geq 2$,
then $\operatorname{rk}\Nhb (E)\geq 1$.
\end{theorem}

\begin{proof}
Let $L'\subset \Endhb(E)$ be a homogeneous, rank-one sub-bundle and let
$f\in L'\cap \End_A(E)=L_0'\setminus \{\theta_0\}$ be a non zero
nilpotent element.  Let $e \in E_0$ be such that $f(e)\neq 0\in
E_0$. Since $\End_{hb}(E)\cong_{vb}
\Aut_{hb}(E)*_{\Aut_A(E)}\End_A(E)$, for every $a\in A$, there
exists $(h_a,t_a)\in \Authb(E)$ such that $L'_a=\Bbbk (h_a\circ
f)$. Hence,  $\varphi: L'\to E$, $\varphi (l)=l(e)$ is an
injective  morphism of homogeneous vector bundles, and since $E$
is obtained by successive extensions of $L$, it follows that
$L'\cong L$. Thus,  $\Endhb(E)$ is also obtained   by successive
extensions of $L$.

From Theorems \ref{thm.autnilp}  and  \ref{thm:Endhbext},
$\Nhb(E)$ is a homogeneous vector bundle, obtained by successive
extensions of $L$. It remains to show that $\operatorname{rk}
\Nhb(E)\geq 1$. But by Proposition \ref{propdimend}, there exists
$0\ne \varphi\in \End_A(E)$ such that $\varphi^2=0$, which is the
desired conclusion.
\smartqed\qed\end{proof}

We are thus led to the following generalization of Miyanishi's
Structure Theorem (see Remark \ref{thm:miyaandmukai}).

\begin{theorem}
\label{thm:sumadirecta} Let $E\cong L\otimes F\to A$ be an
indecomposable homogeneous vector bundle. Then  there exists an
exact sequence  of vector bundles over $A$
\begin{center}
\mbox{ \xymatrix{ 0 \ar@{->}[r] & \Nhb (E)\ar@{->}[r]&
\Endhb(E)\ar@{->}[r]^-{\rho} & \Endhb(L)\cong L\ar@{->}[r]      &
0. } }
\end{center}
 Moreover,
the morphisms in the above sequence are compatible with the
algebraic semigroup structures.  Furthermore, if
$E\not\cong L$, then the sequence is  non-split.
\end{theorem}

\begin{proof}
By Remark
\ref{thm:miyaandmukai},  $L\subset E$ is $\Authb(E)$-stable and $\Endhb(E)$-stable.
It is easy to see that the
restriction $\rho: \Endhb(E)\to \Endhb(L)$ is a morphism of
algebraic monoids. In particular, it is compatible with the underlying
vector bundle structures.

By Theorem \ref{thm.autnilp}, $\Endhb(E)=\Authb(E)\sqcup \Nhb(E)$.
It is clear that if $g\in \Authb(E)$, then $\rho(g)\in
\Authb(L)=L\setminus \Theta(L)$ and if $(f,t_a)\in \Nhb(E)$, there
exists $n\in \mathbb N$ such that $f^n=\theta_{na}$. It follows
that the restriction $f|_{_L}$ belongs to $\Nhb(L)=\Theta(L)$. Therefore,
$\Nhb(E)=\Ker(\rho)$.

Assume now that the exact sequence splits. Then there exists an
immersion of homogeneous vector bundles $\iota:L\hookrightarrow
\Endhb(E)$, such that $\rho\circ\iota=\operatorname{Id}_L$. In
particular, $\iota\bigl(L\setminus \Theta(L)\bigr)\subset
\Authb(E)$. Let $E_0$  be the fiber of $E$ over $0\in A$  and
consider the morphism of vector bundles
\[
\varphi: L\otimes E_0\cong \Endhb(L)\otimes E_0\to E, \quad \quad
\varphi(f\otimes v)= f(v).
\]

Let $e\in E$ be such that $\pi(e)=a$ and $f\in L\setminus
\Theta(L)$ be such that $\alpha(f)=a$. Then
$\varphi\bigl(\iota(f)\otimes \iota(f)^{-1}(e))=e$, and it follows
that $\varphi$ is a surjective morphism of homogeneous vector
bundles of the same rank. Thus, $\varphi$ is an isomorphism. But
$L\otimes E_0$ is decomposable unless $\dim E_0=1$. Therefore,
$E\cong_{vb}L$, and the proof is completed.
\smartqed\qed\end{proof}

\section{Explicit calculations for small rank}
\label{sec:exam}

 The algebra of endomorphisms of successive extensions
of line bundles  over a curve, of small rank,  has been studied
in \cite{kn:lebp1,kn:lebp3, kn:lebp4, kn:lebp5,
  kn:lebp6}. We use a fairly straightforward generalization of such
  results to give an explicit description
of the endomorphisms monoid of indecomposable homogeneous vector
bundles of rank $2$ and $3$ over abelian varieties.

\subsection{Homomorphisms between a homogeneous line bundle and a
 homogeneous vector bundle}

As we saw in Section \ref{sec:endmon}, any homogeneous line bundle
is an algebraic monoid, and is isomorphic to its endomorphisms
monoid. In this section we give a description of $\Hom_{hb}(E,E')$
when one of the homogeneous vector bundles is a line bundle and
the other is an indecomposable  homogeneous vector
bundle.

\begin{proposition}
\label{prop:trivialcase} Let $E=L\otimes F$ be an  indecomposable
homogeneous vector bundle of rank  $\operatorname{rk}E =n\geq 2$,
and $L'$ a homogeneous line bundle. Then,
\begin{enumerate}
\item if $L=L'$,  then
\begin{enumerate} \item  $\Hom_{hb}(L,E)\cong _{hb}\oplus ^rL $ where $r=\dim H^0(A,F)$.
\item  $\Hom_{hb}(E,L)\cong _{hb}\oplus ^sL $ where $s=\dim
H^0(A,F^\vee).$
\end{enumerate}
\item If $L\ne L'$, then $\Hom_{hb}(E,L')\cong _{hb}
\Hom_{hb}(L',E)\cong _{hb}A\times \{0\}.$
\end{enumerate}
\end{proposition}

\begin{proof} From what has been already proven, we have that
\[
\begin{split}
\Hom_{hb}(\cO_A,F) & \cong_{vb}
\Authb(\cO_A)*_{\Aut_A(\cO_A)}\Hom_A(\cO_A,F)\\
& =_{vb} (A\times\bk^*)*_{\bk^*\operatorname{Id}}\Hom_A(\cO_A,F)
\cong_{vb}A\times \Hom_A(\cO_A,F).
\end{split}
\]

It follows that    $\Hom_{hb}(\cO_A,F)$ is a  trivial bundle, with
fiber isomorphic to $\Hom_A(\cO_A,F)=H^0(A,F)$, i.e. $
\Hom_{hb}(\cO_A,F)\cong_{vb}  A\times H^0(A,F)$, which by Proposition \ref{prop:lotimesf} proves (1).

The proof for (2) is similar. In this case we have that $
\Hom_{hb}(F,\cO_A)\cong_{vb}  A\times H^0(A,F^\vee)$.
\smartqed\qed\end{proof}

\subsection{Homomorphisms between  indecomposable homogeneous vector
  bundles of rank $2$}

Let $E$ and $E'$ be two non-isomorphic indecomposable homogeneous
vector bundles of rank $2$. Let
\[
\rho _E: 0\to L \stackrel{j}{\longrightarrow} E
\stackrel{\pi}{\longrightarrow}L \to 0
\]
 and
\[
\rho _{E'}: 0\to L' \stackrel{i_1}{\longrightarrow} E'
\stackrel{p_1}{\longrightarrow} L' \to 0
\]
be the extensions associated with $E$ and $E'$, respectively. By
Proposition \ref{prop:lotimesf}, if $L\ncong _{vb} L'$, then
$\Hom_A(E,E')=0$. If $L\cong_{vb}L'$, then $\Hom_A(E,E')\ne 0$,
since $0\ne \phi= i_1\circ \pi \in \Hom_A(E,E')$.

We are thus led to the following strengthening of Theorem
\ref{thm:Endhbext}.

\begin{proposition} Let $E,E'$ be as above. If $L\cong_{vb}L'$, then $\Hom_{hb}(E,E')\cong_{vb}L$.
\end{proposition}
 \begin{proof}
 It is sufficient to prove that
$\Hom_A(E,E')=\bk\phi$ (see Theorem \ref{thm:Endhbext}).

Let $ 0\neq \varphi\in\Hom_A(E,E')$. Since $E$ and $E'$ are
non-isomorphic, the image $\varphi(E)$ is a line sub-bundle $L_0$
of $E'$. Moreover, $\psi: p_1\circ \varphi : E\to L$ is a non zero
homomorphism of homogeneous vector bundles. If $\varphi(E)=L_0\neq
L$ we get a contradiction, since, by  Proposition
\ref{prop:trivialcase}, $\Hom_{hb}(E,L)\cong_{hb} A\times \{0\}$.
Thus, $L_0\cong_{hb}L$.

If  $ \varphi \ne \lambda \phi$ with $\lambda\in \Bbbk$, $0\ne
\varphi\circ j:L\to L_0$ is an isomorphism, and then $\varphi\circ
(\varphi\circ j)^{-1}:E\to L$ is a spitting. This contradicts our
assumption. Hence, $\varphi=\lambda\phi$, which completes the
proof.
\smartqed\qed\end{proof}

When $E=E'$ we have that $\dim \End_A(E)\leq 2$ (see Theorem
\ref{thm:enhdvb}). To describe $\Endhb(E)$ we consider the
associated exact sequence
\[
0\to L\stackrel{i}{\longrightarrow} E\stackrel{p}{\longrightarrow}
L\to 0
\]
where $L$ is a homogeneous line bundle.

Note that $\operatorname{rk}\Nhb(E)\geq 1,$ since $0\ne \varphi=
i\circ p: E\to E$ satisfies  $\varphi ^2=0$   and
$\operatorname{rk} \Endhb(E)=2$. Hence, $\End_A(E)\cong
\bk\operatorname{Id}\oplus \bk \varphi$
 (see \cite{kn:atiyahconn}). Therefore,

\begin{proposition}
\label{prop:rank2}  Let $E$ be a homogeneous vector bundle of rank 2. Then $\Endhb(E)$ is a commutative algebraic monoid,
and $\End_A(E)\cong \bk[t]/(t^2)$. Moreover,
$\Endhb(E)\cong_{vb}E$.
\end{proposition}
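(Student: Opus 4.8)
The plan is to assemble the statement from the general machinery already developed, rather than computing anything ab initio. First I would record that $E\to A$ indecomposable of rank $2$ means, by Remark \ref{thm:miyaandmukai}, that $E\cong L\otimes F$ with $L$ a homogeneous line bundle and $F$ unipotent of rank $2$; writing down the extension $0\to L\to E\to L\to 0$ gives the nonzero endomorphism $\varphi=i\circ p\in\End_A(E)$ with $\varphi^2=0$. By Theorem \ref{thm:enhdvb} the fiber algebra $\End_A(E)$ has dimension at most $1+2(2-1)/2=2$, and by Proposition \ref{propdimend} it has dimension at least $2$; hence $\dim_\bk\End_A(E)=2$. Invoking Atiyah's decomposition $\End_A(E)=\bk\cdot\Id_E\oplus N_A(E)$ for indecomposable bundles (recalled just before Theorem \ref{thm.autnilp}), and noting $N_A(E)$ is then one-dimensional and spanned by $\varphi$, we get $\End_A(E)=\bk\Id_E\oplus\bk\varphi$ with $\varphi^2=0$. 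The $\bk$-algebra map sending $t\mapsto\varphi$ therefore induces $\End_A(E)\cong\bk[t]/(t^2)$; in particular $\End_A(E)$ is commutative.

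Next I would deduce commutativity of the whole monoid $\Endhb(E)$. By Remark \ref{rem:endwithcenter} we have $\Endhb(E)=\operatorname{Z}_{hb}^0(E)\cdot\End_A(E)$, where $\operatorname{Z}_{hb}^0(E)$ is the connected center of $\Endhb(E)$. Elements of the connected center commute with everything by definition, and we have just shown $\End_A(E)$ is commutative; since $\Endhb(E)$ is generated as a monoid by these two commuting pieces, it is commutative. (Alternatively: $\Endhb(E)\cong_{vb} L\otimes\Endhb(F)$ by Proposition \ref{prop:lotimesf}, $L$ is a commutative monoid by Lemma \ref{lem:linemon}, and $\Endhb(F)$ is commutative by the same fiber-algebra argument applied to $F$; a tensor product of commutative monoids over an abelian base is commutative.)

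Finally, for $\Endhb(E)\cong_{vb}E$: by Theorem \ref{thm:enhdvb}, $\Endhb(E)\to A$ is a homogeneous vector bundle with fiber $\End_A(E)$, hence of rank $2$; by Theorem \ref{thm:Endhbext} it is obtained by successive extensions of $L$, so it sits in an exact sequence $0\to L\to\Endhb(E)\to L\to 0$ of homogeneous vector bundles (using Theorem \ref{thm:sumadirecta}, $\Nhb(E)$ is the kernel sub-bundle and is a line bundle, necessarily $\cong L$ by Proposition \ref{prop:nontrivialendo}, while the quotient is $\Endhb(L)\cong L$). Both $E$ and $\Endhb(E)$ are thus nonsplit extensions of $L$ by $L$ — nonsplit since $E$ is indecomposable and, for $\Endhb(E)$, since it would otherwise be $\cong L$ by the last clause of Theorem \ref{thm:sumadirecta}, contradicting rank $2$. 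In the category of homogeneous vector bundles, which is abelian (Mukai), $\operatorname{Ext}^1(L,L)$ corresponds to $\operatorname{Ext}^1_{\mathcal O_{\widehat x}}(\mathcal O_{\widehat x},\mathcal O_{\widehat x})$, a one-dimensional space; hence up to isomorphism there is a unique nonsplit extension of $L$ by $L$, and $\Endhb(E)\cong_{vb}E$.

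The main obstacle I anticipate is the last identification $\Endhb(E)\cong_{vb}E$: passing from "both are nonsplit self-extensions of $L$" to "isomorphic" requires knowing that the relevant $\operatorname{Ext}^1$ in $\mathcal H_{L,A}$ is one-dimensional (equivalently, that the extension class is unique up to the scalar action), which uses the Fourier--Mukai description of $\mathcal H_{L,A}$ as modules over a point of $\widehat A$. Everything else is bookkeeping with results already in hand.
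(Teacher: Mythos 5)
Your derivation of $\End_A(E)\cong\bk[t]/(t^2)$ and of the commutativity of $\Endhb(E)$ is correct and matches what the paper does: the paper establishes exactly these points in the discussion immediately preceding the proposition (via the nilpotent $\varphi=i\circ p$, the bound from Theorem \ref{thm:enhdvb}, and Atiyah's decomposition), and its proof opens with ``the only assertion that remains to prove is the last one.'' The problem is that last assertion. You reduce $\Endhb(E)\cong_{vb}E$ to the claim that a nonsplit extension of $L$ by $L$ is unique up to isomorphism because $\operatorname{Ext}^1(L,L)$ is one-dimensional. That is false in general: $\operatorname{Ext}^1(L,L)\cong H^1(A,\cO_A)$ has dimension $g=\dim A$, and under Fourier--Mukai it is $\operatorname{Ext}^1_R(\bk,\bk)\cong(\mathfrak m/\mathfrak m^2)^*$ for the $g$-dimensional regular local ring $R=\mathcal O_{\widehat{A},\widehat{0}}$. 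For $g\geq 2$ non-proportional extension classes produce non-isomorphic middle terms (the length-two modules $R/(x_1^2,x_2,\dots,x_g)$ and $R/(x_1,x_2^2,x_3,\dots,x_g)$ are not isomorphic as $R$-modules), and the paper itself implicitly acknowledges this by devoting a subsection to $\Hom_{hb}(E,E')$ for two \emph{non-isomorphic} indecomposable rank-$2$ bundles both obtained as extensions of the same $L$ by $L$. So your argument proves the statement only when $A$ is an elliptic curve.

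The paper closes this gap by a different mechanism: it writes down $\End_A(E)$ as the algebra of matrices $\left(\begin{smallmatrix}a&b\\0&a\end{smallmatrix}\right)$, checks explicitly that the $\Aut_A(E)$-representations on $E_0$ and on $\End_A(E)$ (by post-composition) are isomorphic two-dimensional modules, and then invokes the induced-bundle descriptions $E\cong_{vb}\Authb(E)*_{\Aut_A(E)}E_0$ (Theorem \ref{thm:structure}) and $\Endhb(E)\cong_{vb}\Authb(E)*_{\Aut_A(E)}\End_A(E)$ to transport that module isomorphism to an isomorphism of vector bundles. Some argument of this kind --- identifying the two fibers as modules over the structure group rather than merely as extensions --- is what you need to substitute for the $\operatorname{Ext}^1$ step; everything before that point in your write-up stands.
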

\begin{proof}
The only assertion that  still needs proof here is the last one. For
this, observe that
\[
\End_A(E)\cong_{am} \left\{\left( \begin{smallmatrix}
a & b\\
0 & a
\end{smallmatrix}\right) \mathrel : a,b\in\bk\right\},
\]
with action on the fiber $E_0$ given as follows: consider an
isomorphism  $E_0\cong \bk^2$  such that $(1,0)\in
\Ker(p)_0$. In other words, $(1,0)$ belongs to the fiber $L_0$, of
the line bundle $L\subset E$ which is $\Aut(E)$-stable. Under this
identification, the action $\End_A(E)\times E_0\to E_0$  is given
by $\left(
\begin{smallmatrix}
a & b\\
0 & a
\end{smallmatrix}\right)\cdot (x,y)= (ax+by, ay)$.

On the other hand,  the action of $\Aut_A(E)$ on $\End_A(E)$ is
given by $\left( \begin{smallmatrix}
a & b\\
0 & a
\end{smallmatrix}\right)\cdot \left( \begin{smallmatrix}
x & y\\
0 & x
\end{smallmatrix}\right)=\left( \begin{smallmatrix}
ax & ay+bx\\
0 & ax
\end{smallmatrix}\right)$. Thus, there exists an isomorphism of
$\Aut_A(E)$-modules $\varphi: E_0\to \End_A(E)$, which implies
that the morphism
\[
\begin{array}{rcl}
\psi: \Authb(E)*_{\Aut_A(E)}
E_0 & \to & \Authb(E)*_{\Aut_A(E)}\End_A(E)\\
  \psi\bigl(\bigl[(f,t_a), e_0\bigr]\bigr) &=& \bigl[(f,t_a),
\varphi(e_0)\bigr]
\end{array}
\]
is an isomorphism of  vector bundles, and $E\cong_{vb}
 \Endhb(E)$ as claimed.
\smartqed\qed\end{proof}

\smallskip

\subsection{The endomorphisms monoid of an indecomposable,
  homogeneous,  rank 3  vector
  bundle}

For indecomposable homogeneous vector bundles $E$ of rank $3$,
$\dim \End_A(E)\leq 4$ (see Theorem \ref{thm:enhdvb}). As in
\cite{kn:lebp1,kn:lebp3},
 $\End_A(E)$ is a commutative algebra of dimension
  $2\leq \dim \End_A(E)\leq 3$ and the possibilities are
\[
\End_A(E)=\left\{\begin{matrix}
 \bk[t]/(t^2)  & \text {or}\\
\bk[t]/(t^3)  & \text {or}\\
\bk[r,s]/(r,s)^2 &
\end{matrix}
\right.
\]


The structure of $\End_A(E)$ depends on the extensions associated
with $E$ and their relations. For higher rank there will be more
possibilities for $\End_A(E)$. The study of these cases should help us
find the answer to
 Question \ref{question}.
However, this topic exceeds the scope of this paper.

The remainder of this section is devoted to the study of the case
$\End_A(E)\cong \bk[t]/(t^r)$.

 Assume that $E\to A$ has rank
$r\geq 3$ and $\End_A(E)\cong \bk[t]/(t^r)$.  As in the rank $2$
case, we claim that there exists an isomorphism $\varphi: E_0\to
\End_A(E)$ of $\Aut_A(E)$-modules which induces an isomorphism
\[
\begin{array}{rcl}
\psi: \Authb(E)*_{\Aut_A(E)}
E_0 & \to & \Authb(E)*_{\Aut_A(E)}\End_A(E)\\
  \psi\bigl(\bigl[(f,t_a), e_0\bigr]\bigr) &=& \bigl[(f,t_a),
\varphi(e_0)\bigr]
\end{array}
\]
 of  vector bundles and hence, $E\cong_{vb}
 \Endhb(E)$. We give the proof of the claim only for the case $r=3$.
The proof of the general case is similar, and is
left to the reader.

\begin{proposition}
\label{prop:rank3} If $\operatorname{rk}(E)=3$ and $\End_A(E)\cong  \bk[t]/(t^3)$,
then $\Endhb(E)$ is a commutative algebraic monoid,  and
$\Endhb(E)\cong_{vb}E$.
\end{proposition}

\begin{proof} It suffices to prove that the representations
  $\Aut_A(E)\times \End_A(E)\to \End_A(E)$ and $\Aut_A(E)\times E_0\to
  E_0$ are isomorphic. In this case,
 \[
\End_A(E)\cong_{am} \left\{\left( \begin{smallmatrix}
a & b & c\\
0 & a & b\\
0& 0& a
\end{smallmatrix}\right) \mathrel a,b\in\bk\right\},
\]
and the action over the fiber $E_0$ is given as follows: consider
an isomorphism $E_0\cong \bk^3$, such that $(1,0,0)\in (E_1)_0$,
where $L=E_1\subset E_2\subset E$ is a $\Aut(E)$-stable
filtration. Under this identification the action  $\End_A(E)\times
E_0\to E_0$  is given by $\left( \begin{smallmatrix}
a & b & c\\
0 & a & b\\
0& 0& a
\end{smallmatrix}\right)
\cdot (x,y,z)= (ax+by+cz, ay+bz,az)$.

On the other hand,  the action of $\Aut_A(E)$ on $\End_A(E)$ is
given by

$\left( \begin{smallmatrix}
a & b & c\\
0 & a & b\\
0& 0& a
\end{smallmatrix}\right) \cdot
 \left( \begin{smallmatrix}
x & y & z\\
0 & x & y\\
0 & 0 & x
\end{smallmatrix}\right)=\left( \begin{smallmatrix}
ax & bx+ay & ax+by+cz\\
0 & ax & bz+ay\\
0&0& ax
\end{smallmatrix}\right)$.

Therefore, there exists an isomorphism $\varphi: E_0\to \End_A(E)$
of $\Aut_A(E)$-modules  and hence the homomorphism
\[
\begin{array}{rcl}
\psi: \Authb(E)*_{\Aut_A(E)}
E_0 & \to & \Authb(E)*_{\Aut_A(E)}\End_A(E)\\
  \psi\bigl(\bigl[(f,t_a), e_0\bigr]\bigr) &=& \bigl[(f,t_a),
\varphi(e_0)\bigr]
\end{array}
\]
is an isomorphism of  vector bundles and $E\cong_{vb}
 \Endhb(E)$ as claimed.
\smartqed\qed\end{proof}

\begin{acknowledgement}  The first author is a
member of the research group VBAC (Vector Bundles on Algebraic
Curves). The second author wishes to thank CIMAT (Guanajuato,
Mexico) where the paper was written, for the invitation and
hospitality, and to ANII and CSIC (Uruguay) for partial support. Both
authors acknowledge the support of CONACYT. We also thank Lex Renner
for many helpful suggestions.
\end{acknowledgement}

\end{document}